\pdfoutput=1
\documentclass[
paper=letter,%
pagesize,%
numbers=noendperiod,%
captions=nooneline,%
abstracton%
]{scrartcl}

\usepackage[T1]{fontenc}%
\usepackage{lmodern}%
\usepackage[american]{babel}%
\usepackage{microtype}%

\usepackage[
hyperref,%
table%
]{xcolor}%

\usepackage{scrlayer-scrpage}%

\usepackage{eso-pic}%
\usepackage{rotating}%
\usepackage{amsmath}%
\usepackage{mathtools}%
\usepackage{amssymb}%
\usepackage{amsthm}%
\usepackage{thmtools}%
\usepackage{etoolbox}%
\usepackage{bm}%
\usepackage{bbm}%
\usepackage{enumitem}%
\usepackage{graphicx}%
\usepackage{grffile}%
\usepackage{tikz}%
\usepackage{wrapfig}%
\usepackage{tabularx}%
\usepackage{siunitx}%
\usepackage{booktabs}%
\usepackage{multirow}%
\usepackage{vruler}%
\usepackage{fancyvrb}%
\usepackage{listings}%
\usepackage{csquotes}%
\usepackage[
backend=bibtex,%
style=authoryear,%
dashed=false,%
uniquename=init,%
firstinits=true,%
hyperref=true,%
useprefix=true,%
maxcitenames=2,%
maxbibnames=6,%
date=iso8601,%
urldate=iso8601%
]{biblatex}%
\usepackage[
hypertexnames=false,%
setpagesize=false,%
pdfborder={0 0 0},%
pdfstartview=Fit,%
bookmarksopen=true,%
bookmarksnumbered=true%
]{hyperref}%
\xdefinecolor{lightgray}{RGB}{247, 247, 247}%
\xdefinecolor{semilightgray}{RGB}{240, 240, 240}%
\xdefinecolor{middlegray}{RGB}{127, 127, 127}%
\xdefinecolor{blue}{RGB}{58, 95, 205}%
\xdefinecolor{deepskyblue}{RGB}{0, 154, 205}%
\xdefinecolor{chocolate}{RGB}{205, 102, 29}%

\pagestyle{scrheadings}%
\setkomafont{pageheadfoot}{\normalfont\normalcolor\sffamily}%
\setkomafont{pagenumber}{\normalfont\normalcolor\sffamily}%
\automark{section}%
\setcounter{secnumdepth}{3}%
\setkomafont{captionlabel}{\normalfont\normalcolor\sffamily\bfseries}%

\setlist{%
  align=left,%
  labelsep=*,%
  leftmargin=*,%
  topsep=1mm,%
  itemsep=0mm%
}
\newcommand*{\mysquare}{\rule[0.18em]{0.36em}{0.36em}}
\newcommand*{\mytriangle}{\raisebox{0.12em}{\resizebox{0.48em}{0.48em}{$\blacktriangleright$}}}
\newcommand*{\mybar}{\rule[0.32em]{0.62em}{0.08em}}
\newcommand*{\mydot}{\raisebox{0.14em}{\resizebox{0.44em}{!}{$\bullet$}}}
\setlist[itemize,1]{label={\mysquare\ }}%
\setlist[itemize,2]{label={\mytriangle\ }}%
\setlist[itemize,3]{label={\mybar\ }}%
\setlist[itemize,4]{label={\mydot\ }}%
\setlist[enumerate,1]{label=\arabic*)}%
\setlist[enumerate,2]{label=\arabic{enumi}.\arabic*)}%
\setlist[enumerate,3]{label=\arabic{enumi}.\arabic{enumii}.\arabic*)}%

\makeatletter
\newcommand\myisodate{\number\year-\ifcase\month\or 01\or 02\or 03\or 04\or 05\or 06\or 07\or 08\or 09\or 10\or 11\or 12\fi-\ifcase\day\or 01\or 02\or 03\or 04\or 05\or 06\or 07\or 08\or 09\or 10\or 11\or 12\or 13\or 14\or 15\or 16\or 17\or 18\or 19\or 20\or 21\or 22\or 23\or 24\or 25\or 26\or 27\or 28\or 29\or 30\or 31\fi}%
\makeatother
\newcommand*{\abstractnoindent}{}%
\let\abstractnoindent\abstract
\renewcommand*{\abstract}{\let\quotation\quote\let\endquotation\endquote
  \abstractnoindent}
\deffootnote[1em]{1em}{1em}{\textsuperscript{\thefootnotemark}}%
\pdfstringdefDisableCommands{\let\bm\relax}%

\lstset{%
  basicstyle=\ttfamily\small,%
  frame=lrtb, framerule=0pt, framexleftmargin=1pt,%
  basewidth=0.5em,%
  tabsize=8,%
  showstringspaces=false,%
  captionpos=b,%
  breaklines=true,%
  fancyvrb=true,%
  extendedchars=false,%
  rangeprefix=\#\#'\ \{\ ,%
  rangesuffix=\ \},%
  includerangemarker=false%
}

\lstdefinestyle{input}{
  backgroundcolor=\color{semilightgray},%
  commentstyle=\itshape\color{chocolate},%
  keywordstyle=\color{blue},%
  stringstyle=\color{blue},%
  numbers=left,%
  numbersep=4.8pt,%
  numberstyle=\color{darkgray!80}\tiny%
}
\lstdefinestyle{output}{
  backgroundcolor=\color{lightgray}%
}

\lstdefinestyle{Lstyle}{
  language=[LaTeX]TeX,%
  texcs={},%
  otherkeywords={}%
}

\lstnewenvironment{Linput}[1][]{%
  \lstset{style=input, style=Lstyle}
  #1%
}{\vspace{-0.25\baselineskip}}%

\lstnewenvironment{Loutput}[1][]{%
  \lstset{style=output, style=Lstyle}
  #1%
}{\vspace{-0.25\baselineskip}}%

\lstdefinestyle{Rstyle}{
  language=R,%
  keywords={if, else, repeat, while, function, for, in, next, break},%
  otherkeywords={}%
}

\lstnewenvironment{Rinput}[1][]{%
  \lstset{style=input, style=Rstyle}
  #1%
}{\vspace{-0.25\baselineskip}}%

\lstnewenvironment{Routput}[1][]{%
  \lstset{style=output, style=Rstyle}
  #1%
}{\vspace{-0.25\baselineskip}}%

\setlength{\bibhang}{1em}%
\DeclareNameAlias{sortname}{last-first}%
\DefineBibliographyExtras{american}{\DeclareQuotePunctuation{}}%
\renewbibmacro*{volume+number+eid}{%
  \setunit*{\addcomma\space}%
  \printfield{volume}%
  \printfield{number}}
\DeclareFieldFormat*{number}{(#1)}
\DeclareFieldFormat*{title}{#1}%
\DeclareFieldFormat{doi}{%
  \ifhyperref
    {\href{http://dx.doi.org/#1}{\nolinkurl{doi:#1}}}%
    {\nolinkurl{doi:#1}}}%
\renewbibmacro*{in:}{}%
\DeclareFieldFormat{isbn}{ISBN #1}%
\DeclareFieldFormat{pages}{#1}%
\DeclareFieldFormat{url}{\url{#1}}%
\DeclareFieldFormat{urldate}{\mkbibparens{#1}}%
\addbibresource{bib.bib}%
\renewcommand*{\cite}[2][]{\textcite[#1]{#2}}%

\newif\ifstarttheorem
\declaretheoremstyle[%
  spaceabove=0.5em,
  spacebelow=0.5em,
  headfont=\sffamily\bfseries\global\starttheoremtrue,
  notefont=\sffamily\bfseries,
  notebraces={(}{)},
  headpunct={},
  bodyfont=\normalfont,
  postheadspace=\newline%
]{mythmstyle}
\declaretheorem[style=mythmstyle, numberwithin=section]{definition}%
\declaretheorem[style=mythmstyle, sibling=definition]{proposition}
\declaretheorem[style=mythmstyle, sibling=definition]{lemma}
\declaretheorem[style=mythmstyle, sibling=definition]{theorem}
\declaretheorem[style=mythmstyle, sibling=definition]{corollary}
\declaretheorem[style=mythmstyle, sibling=definition]{remark}
\declaretheorem[style=mythmstyle, sibling=definition]{example}
\declaretheorem[style=mythmstyle, sibling=definition]{algorithm}
\declaretheorem[style=mythmstyle, sibling=definition]{assumption}

\makeatletter
\preto\itemize{%
  \if@inlabel
    \ifstarttheorem
      \mbox{}\par\nobreak\vskip\glueexpr-\parskip-\baselineskip+0.25em\relax\hrule\@height\z@
    \fi%
  \fi%
  \global\starttheoremfalse%
 \def\tempa{proof}%
 \ifx\tempa\mycurrenvir
    \ifstarttheorem
      \mbox{}\par\nobreak\vskip\glueexpr-\parskip-\baselineskip+0.25em\relax\hrule\@height\z@
    \fi%
 \fi%
 \global\starttheoremfalse%
}
\preto\enditemize{\global\starttheoremfalse}
\makeatother

\makeatletter
\preto\enumerate{%
  \if@inlabel
    \ifstarttheorem
      \mbox{}\par\nobreak\vskip\glueexpr-\parskip-\baselineskip+0.25em\relax\hrule\@height\z@
    \fi%
  \fi%
  \global\starttheoremfalse%
 \def\tempa{proof}%
 \ifx\tempa\mycurrenvir
    \ifstarttheorem
      \mbox{}\par\nobreak\vskip\glueexpr-\parskip-\baselineskip+0.25em\relax\hrule\@height\z@
    \fi%
 \fi%
 \global\starttheoremfalse%
}
\preto\endenumerate{\global\starttheoremfalse}
\makeatother

\newcommand{\ou}[3]{%
  \mathrel{%
    \vcenter{\offinterlineskip
      \ialign{##\cr$#1$\cr\noalign{\kern-#3}$#2$\cr}%
    }%
  }%
}
\newcommand*{\omu}[3]{\underset{#3}{\overset{#1}{#2}}}
\newcommand*{\omuc}[3]{\underset{\mathclap{#3}}{\overset{\mathclap{#1}}{#2}}}%

\newcommand*{\psiis}[1]{{\psi_{#1}^{-1}}}

\newcommand*{\isim}{\omu{\text{\tiny{ind.}}}{\sim}{}}

\newcommand*{\IN}{\mathbb{N}}

\newcommand*{\IR}{\mathbb{R}}

\newcommand*{\Anc}{\operatorname{Anc}}

\newcommand*{\Exp}{\operatorname{Exp}}

\newcommand*{\N}{\operatorname{N}}

\renewcommand*{\S}{\operatorname{S}}
\newcommand*{\PS}{\operatorname{PS}}

\newcommand*{\I}{\mathbbm{1}}
\newcommand*{\rd}{\mathrm{d}}

\newcommand*{\sign}{\operatorname{sign}}

\newcommand*{\D}{\operatorname{D}}

\newcommand*{\LS}{\mathcal{LS}}
\newcommand*{\LSi}{\LS^{-1}}

\renewcommand*{\P}{\mathbbm{P}}
\newcommand*{\E}{\mathbb{E}}

\newcommand*{\Cov}{\operatorname{Cov}}

\newcommand*{\psii}{{\psi^{-1}}}

\newcommand*{\R}{\textsf{R}}
\newcommand*{\eps}{\varepsilon}

\hyphenation{Ar-chi-me-dean}

\begin{document}
\thispagestyle{plain}
\begin{center}
  \sffamily
  {\bfseries\LARGE Hierarchical Archimax copulas\par}
  \bigskip\smallskip
  {\Large Marius Hofert\footnote{Department of Statistics and Actuarial Science, University of
    Waterloo, 200 University Avenue West, Waterloo, ON, N2L
    3G1,
    \href{mailto:marius.hofert@uwaterloo.ca}{\nolinkurl{marius.hofert@uwaterloo.ca}}. The
    author would like to thank NSERC for financial support for this work through Discovery
    Grant RGPIN-5010-2015.},
    Rapha\"{e}l Huser\footnote{Computer, Electrical and Mathematical Science and
      Engineering Devision, King Abdullah University of Science and Technology, \href{mailto:raphael.huser@kaust.edu.sa}{\nolinkurl{raphael.huser@kaust.edu.sa}}.},
    Avinash Prasad\footnote{Department of Statistics and Actuarial Science, University of
    Waterloo, 200 University Avenue West, Waterloo, ON, N2L
    3G1,
    \href{mailto:a2prasad@uwaterloo.ca}{\nolinkurl{a2prasad@uwaterloo.ca}}.}
    \par
    \bigskip
    \myisodate\par}
\end{center}
\par\smallskip
\begin{abstract}
  The class of Archimax copulas is generalized to hierarchical Archimax copulas
  in two ways. First, a hierarchical construction of $d$-norm generators is
  introduced to construct hierarchical stable tail dependence functions which
  induce a hierarchical structure on %
  Archimax copulas. Second, by itself or additionally, hierarchical frailties
  are introduced to extend Archimax copulas to hierarchical Archimax copulas in a similar way as
  nested Archimedean copulas extend Archimedean copulas. Possible extensions to
  nested Archimax copulas are discussed. Additionally, a general formula for the
  density and its evaluation of Archimax copulas is introduced.
\end{abstract}
\minisec{Keywords}
Archimedean copulas, nested Archimedean copulas, extreme-value copulas, Archimax
copulas, hierarchies, nesting.
\minisec{MSC2010}
60E05, 62E15, 62H99%

\section{Introduction}
The class of Archimax copulas, see \cite{caperaafougeresgenest2000} and
\cite{charpentierfougeresgenestneslehova2014}, generalizes Archimedean copulas
to incorporate a stable tail dependence function as known from extreme-value
copulas. As special cases, Archimax copulas can be Archimedean or extreme-value
copulas and thus extend both of these classes of copulas. They provide a link between
dependence structures arising in multivariate extremes and Archimedean copulas,
which have intuitive and computationally appealing properties. One feature of
Archimedean copulas is that they can be \emph{nested} in the sense that one can
(under assumptions detailed later) plug Archimedean copulas into each other and
still obtain a proper copula. Such a construction is \emph{hierarchical} in the
sense that certain multivariate margins are exchangeable, yet the copula overall is not;
this additional flexibility to allow for (partial) asymmetry over an
exchangeable model is typically used to model components belonging to different
groups, clusters or business sectors. In this work, we raise the following natural
question (see Sections~\ref{sec:HEVC} and \ref{sec:HAXC}):
\begin{quote}
  How can hierarchical Archimax copulas be constructed?
\end{quote}
Since we work with stochastic representations, sampling is also
covered. Constructing nested Archimax copulas is largely an open problem which
we discuss in Appendix~\ref{sec:NAXC}. Moreover, to fill a gap in the literature,
we present a general formula for the density and its evaluation of Archimax
copulas; see Appendix~\ref{app:AXC:dens}.

In what follows, we assume the reader to be familiar with the basics of
Archimedean copulas (ACs) and extreme-value copulas (EVCs); see, for example,
\cite{mcneilneslehova2009} for the
former (from which we also adopt the notation) and \cite[Chapter~6]{jaworskidurantehaerdlerychlik2010} for the latter.

\section{Hierarchical extreme-value copulas via hierarchical stable tail dependence functions}\label{sec:HEVC}
\subsection{Connection between $d$-norms and stable tail dependence functions}\label{sec:HEVC:dnorm:std}
A copula $C$ is an extreme-value copula if and only if it is \emph{max-stable}, that is, if
\begin{align*}
  C(\bm{u})=C(u_1^{1/m},\dots,u_d^{1/m})^m,\quad m\in\IN,\ \bm{u}\in[0,1]^d;
\end{align*}
see, for example,
\cite[Theorem~6.2.1]{jaworskidurantehaerdlerychlik2010}. An extreme-value copula
$C$ can be characterized in terms of its stable tail dependence function $\ell:[0,\infty)^d\,\to\,[0,\infty)$ via
\begin{align}
  C(\bm{u})= \exp(-\ell(-\log u_1,\dots,-\log u_d)), \quad \bm{u} \in [0,1]^d;\label{eq:gen:form:EVC}
\end{align}
see, for example, %
\cite[Section~8.2]{beirlantgoegebeursegersteugels2004} and
\cite[Chapter~6]{jaworskidurantehaerdlerychlik2010}. A characterization of
stable tail dependence functions
$\ell$ (being homogeneous of order 1, being 1 when evaluated at the
unit vectors in $\IR^d$ and being fully
$d$-max decreasing) is given in \cite{ressel2013} %
and \cite{charpentierfougeresgenestneslehova2014}.

Sampling from EVCs is usually quite challenging and time-consuming for the most
popular models. Examples which are comparably easy to sample are Gumbel and
nested Gumbel copulas, the only Archimedean and nested Archimedean EVCs,
respectively, where a stochastic representation is available; see
\cite[Theorem~4.5.2]{nelsen2006}.
\begin{itemize}
\item The \emph{Gumbel} (or \emph{logistic}) \emph{copula} $C$ with parameter
  $\alpha\in(0,1]$ and stable tail dependence function
  $\ell(\bm{x})=(x_1^{1/\alpha}+\dots+x_d^{1/\alpha})^{\alpha}$,
  $\bm{x}\in[0,\infty)^d$, can be sampled using the algorithm of
  \cite{marshallolkin1988}. It utilizes the stochastic representation
  \begin{align}
    \bm{U}=\Bigl(\psi\Bigl(\frac{E_1}{V}\Bigr),\dots,\psi\Bigl(\frac{E_d}{V}\Bigr)\Bigr)\sim C,\label{eq:stoch:rep:AC}
  \end{align}
  where $\psi(t)=\exp(-t^{\alpha})$ is a Gumbel generator,
  $E_1,\dots,E_d\isim\Exp(1)$, independently of the \emph{frailty}
  $V\sim \PS(\alpha)=\S(\alpha,1,\cos^{1/\alpha}(\alpha\pi/2),\I_{\{\alpha=1\}};1)$; see
  \cite[p.~8]{nolan2017} for the parameterization of this $\alpha$-stable distribution.
\item Nested Gumbel copulas, see \cite{tawn1990}, can also be sampled based on a
  stochastic representation corresponding to the nesting structure; see
  \cite{mcneil2008}. The main idea is to replace the single frailty $V$ by a
  sequence of dependent frailties (all $\alpha$-stable for different $\alpha$),
  nested in a specific way; see Section~\ref{sec:HAXC}.
\end{itemize}

For more complicated EVCs, \cite{schlather2002}, \cite{diekermikosch2015}, and
\cite{dombryengelkeoesting2016} have proposed approximate or exact simulation
schemes based on the following stochastic representation of max-stable
processes; see \cite{dehaan1984}, \cite{penrose1992} and \cite{schlather2002}.

\begin{theorem}[Spectral representation of max-stable processes]
  Let $\{W_i(\bm{s})\}_{i=1}^\infty$ be independent copies of the random process $W(\bm{s})$, $\bm{s}\in\mathcal{S}\subseteq\IR^q$, such that $W(\bm{s})\ge 0$
  and $\E(W(\bm{s}))=1$, $\bm{s}\in \mathcal{S}$. Furthermore, let $\{P_i\}_{i=1}^\infty$ be points of a Poisson
  point process on $[0,\infty)$ with intensity $x^{-2}\,\rd x$. Then
  \begin{align}
    Z(\bm s)=\sup_{i\ge 0}\{P_iW_i(\bm{s})\}\label{eq:stoch:rep:maxstable}
  \end{align}
  is a max-stable random process with unit Fr{\'e}chet margins and
  \begin{align}
    \ell(x_1,\dots,x_d)=\E(\max_{1\le j\le d}\{x_j W(\bm{s}_j)\}),\quad x_1,\dots,x_d>0, \label{stail}
  \end{align}
  is the associated stable tail dependence function of the random vector
  $(Z(\bm{s}_1),\dots,Z(\bm{s}_d))$ for fixed $\bm{s}_1,\dots,\bm{s}_d$. Therefore, if a process $Z(\bm{s})$ can be expressed as
  in \eqref{eq:stoch:rep:maxstable}, the distribution function of the random vector $(Z(\bm{s}_1),\dots,Z(\bm{s}_d))$ is
  $\P(Z(\bm{s}_1)\le x_1,\dots,Z(\bm{s}_d)\le x_d)=\exp(-\ell(1/x_1,\dots,1/x_d))$, that
  is, $(Z(\bm{s}_1),\dots,Z(\bm{s}_d))$ has EVC $C$ with stable tail dependence function $\ell$
  and unit Fr\'echet margins $\exp(-1/x_j)$, $j\in\{1,\dots,d\}$.
\end{theorem}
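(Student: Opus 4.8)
The plan is to derive all three conclusions from a single computation of the finite-dimensional distributions of $Z$ via Poisson point process void probabilities. Since everything the theorem asserts concerns finite-dimensional distributions, I would fix $\bm{s}_1,\dots,\bm{s}_d\in\mathcal{S}$ from the outset, so that only the random vector $(W(\bm{s}_1),\dots,W(\bm{s}_d))\in[0,\infty)^d$ matters and no path-space technicalities enter. First I would mark the points: attaching to each point $P_i$ of the intensity-$x^{-2}\,\rd x$ process on $(0,\infty)$ an independent copy of $(W(\bm{s}_1),\dots,W(\bm{s}_d))$, the Marking Theorem shows that $\{(P_i,W_i(\bm{s}_1),\dots,W_i(\bm{s}_d))\}_{i\ge1}$ is a Poisson point process on $(0,\infty)\times[0,\infty)^d$ with intensity measure $\Lambda=x^{-2}\,\rd x\otimes\nu$, where $\nu$ is the law of $(W(\bm{s}_1),\dots,W(\bm{s}_d))$.

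Next I would fix $x_1,\dots,x_d>0$ and note that, since $Z(\bm{s}_j)=\sup_{i\ge1}P_iW_i(\bm{s}_j)$, the event $\{Z(\bm{s}_1)\le x_1,\dots,Z(\bm{s}_d)\le x_d\}$ is precisely the event that the marked process has no point in
\begin{align*}
  A=\bigl\{(p,w_1,\dots,w_d)\in(0,\infty)\times[0,\infty)^d:\ p\,w_j>x_j\text{ for some }j\in\{1,\dots,d\}\bigr\}.
\end{align*}
The void-probability formula then yields $\P(Z(\bm{s}_1)\le x_1,\dots,Z(\bm{s}_d)\le x_d)=\exp(-\Lambda(A))$ provided $\Lambda(A)<\infty$. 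Computing $\Lambda(A)$ by Fubini, the inner radial integral for fixed $(w_1,\dots,w_d)$ is $\int_0^\infty\I_{\{p\,\max_j(w_j/x_j)>1\}}\,p^{-2}\,\rd p=\max_{1\le j\le d}(w_j/x_j)$, so $\Lambda(A)=\E\bigl(\max_{1\le j\le d}W(\bm{s}_j)/x_j\bigr)$, which is finite since it is dominated by $\sum_{j=1}^d\E(W(\bm{s}_j))/x_j=\sum_{j=1}^d1/x_j$. Recognising the right-hand side as $\ell(1/x_1,\dots,1/x_d)$ via the definition \eqref{stail} gives exactly $\P(Z(\bm{s}_1)\le x_1,\dots,Z(\bm{s}_d)\le x_d)=\exp(-\ell(1/x_1,\dots,1/x_d))$.

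From here the remaining claims should be short. Setting $d=1$ gives $\P(Z(\bm{s})\le x)=\exp(-\E(W(\bm{s}))/x)=\exp(-1/x)$, the unit Fr\'echet law. The function $\ell$ of \eqref{stail} is homogeneous of order $1$ and equals $1$ at each unit vector because $\E(W(\bm{s}_j))=1$, and it is fully $d$-max decreasing automatically, since the computation above exhibits a genuine random vector with distribution function $\bm{x}\mapsto\exp(-\ell(1/x_1,\dots,1/x_d))$; hence $\ell$ is a stable tail dependence function and $(Z(\bm{s}_1),\dots,Z(\bm{s}_d))$ carries the EVC from \eqref{eq:gen:form:EVC} with that $\ell$. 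For max-stability I would take $m\in\IN$ and independent copies $Z^{(1)},\dots,Z^{(m)}$ of $Z$ and reuse the same formula: $\P(\max_{k\le m}Z^{(k)}(\bm{s}_j)\le x_j\ \forall j)=\exp(-m\,\ell(1/x_1,\dots,1/x_d))=\exp(-\ell(m/x_1,\dots,m/x_d))=\P(Z(\bm{s}_j)\le x_j/m\ \forall j)$ by homogeneity, i.e.\ $\{\max_{k\le m}Z^{(k)}(\bm{s})\}\deq\{m\,Z(\bm{s})\}$ in finite-dimensional distributions.

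The main obstacle is the Poisson point process bookkeeping rather than any analysis: one must justify the Marking Theorem (measurability of the map $\bm{s}\mapsto W(\bm{s})$, equivalently of the vector $(W(\bm{s}_1),\dots,W(\bm{s}_d))$), check that the ``bad set'' $A$ is measurable in the relevant product $\sigma$-algebra, and verify $\Lambda(A)<\infty$ so that the void-probability formula applies; after that the only genuine computation is the radial integral $\int_{1/M}^\infty p^{-2}\,\rd p=M$. A careful version of exactly this argument can be found in \cite{dehaan1984}, \cite{penrose1992}, and \cite{schlather2002}.
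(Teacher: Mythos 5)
Your proof is correct, and it is essentially the standard argument: the paper itself states this theorem without proof, deferring to de Haan (1984), Penrose (1992) and Schlather (2002), and those references establish it exactly as you do, via the marking theorem, the Poisson void-probability formula $\exp(-\Lambda(A))$ with the radial integral giving $\Lambda(A)=\E\bigl(\max_{1\le j\le d}W(\bm{s}_j)/x_j\bigr)=\ell(1/x_1,\dots,1/x_d)$, the $d=1$ case for the unit Fr\'echet margins, and homogeneity of $\ell$ for max-stability. No gaps beyond the measurability/finiteness bookkeeping you already flag and handle.
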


For completeness, Algorithm~\ref{alg:sampling:EVCsSchlather} below describes the
traditional approach for simulating max-stable processes constructed using
\eqref{eq:stoch:rep:maxstable}. This algorithm goes back to \cite{schlather2002}
and provides approximate simulations by truncating the supremum to a
finite number of processes in \eqref{eq:stoch:rep:maxstable}. When the random
process $W(\bm{s})$ is bounded almost surely, a stopping criterion may be
designed to optimally select the number of Poisson points $N$ to perform
exact simulation. For more general exact sampling schemes, we refer to
\cite{diekermikosch2015} and \cite{dombryengelkeoesting2016}.

\begin{algorithm}[Approximate sampling of max-stable processes based on~\eqref{eq:stoch:rep:maxstable}]\label{alg:sampling:EVCsSchlather}
  \begin{enumerate}
  \item Simulate $N$ Poisson points $\{P_i\}_{i=1}^N$ in decreasing order as
    $P_i=1/\sum_{k=1}^i E_k$, $i\in\{1,\ldots,N\}$, where
    $E_k\isim\Exp(1)$, $k\in\{1,\dots,N\}$.
  \item Simulate $N$ independent copies $\{W_i(\bm{s})\}_{i=1}^N$ of the process
    $W(\bm{s})$ at a finite set of locations
    $\bm{s}\in\{\bm{s}_1,\ldots,\bm{s}_d\}$.
  \item For each location $\bm{s}\in\{\bm{s}_1,\ldots,\bm{s}_d\}$, set
    $Z(\bm s)=\max_{1\leq i \leq N}\{P_iW_i(\bm{s})\}$.
  \end{enumerate}
\end{algorithm}

By choosing the spatial domain $\mathcal S$ in \eqref{eq:stoch:rep:maxstable} to be finite and
replacing $W(\bm{s}_1),\dots,W(\bm{s}_d)$ by non-negative random variables
$W_1,\dots,W_d$ with $\E(W_j)=1$, $j\in\{1,\dots,d\}$, thus replacing the
random process $W(\bm{s})$ by the non-negative random vector
$\bm{W}=(W_1,\dots,W_d)$, this representation also provides a characterization
of, and sampling algorithms for, (finite-dimensional) EVCs;
from here on we will adopt this ``vector case'' for $W$ and accordingly for $Z$.

We now turn to the link between max-stable random vectors $(Z_1,\dots,Z_d)$
and $d$-norms as recently described in \cite{aulbach2015}. A norm
$\lVert\cdot\rVert_d$ on $\IR^d$ is called a \emph{$d$-norm} if there exists a
random vector $\bm{W}=(W_1,\dots,W_d)$ with $W_j\ge 0$ and $\E(W_j)=1$,
$j\in\{1,\dots,d\}$, such that
\begin{align}
  \lVert\bm{x}\rVert_d = \E(\underset{1\leq j\leq d}{\max}\{|x_j|W_j\})=\E(\lVert\bm{xW}\rVert_{\infty}),\quad \bm{x}=(x_1,\dots,x_d) \in \IR^d, \label{dnorm}
\end{align}
where $\lVert\cdot\rVert_{\infty}$ denotes the supremum norm and $\bm{xW}$ is
understood componentwise.  In this case, $\bm{W}$ is called \emph{generator} of
$\lVert\cdot\rVert_d$. One can compare \eqref{stail} and \eqref{dnorm} to
identify the correspondence
\begin{align}
  \ell(\bm{x})=\lVert\bm{x}\rVert_d=\E(\lVert\bm{xW}\rVert_{\infty}),\quad\bm{x}\in[0,\infty)^d,\label{dnorm:correspondence}
\end{align}
between $d$-norms and stable tail dependence functions
on $[0,\infty)^d$. Specifying a generator $\bm{W}$ thus defines a stable tail
dependence function which in turn characterizes an EVC. The link
\eqref{dnorm:correspondence} with
$d$-norms provides us with a useful method for constructing and sampling
EVCs which can also be exploited for constructing hierarchical EVCs (HEVCs).

We now provide a few examples of $d$-norm generators for well known copulas
which can serve as building blocks for HEVCs (and, see Section~\ref{sec:HAXC},
hierarchical Archimax copulas).
\begin{example}\label{ex:dnorm:generators:and:stdfun}
  \begin{enumerate}
  \item If $\bm{W}=(1,\dots,1)$ with probability one, then $\lVert\bm{x}\rVert_d=\underset{1\leq j\leq d}{\max}{|x_j|}$. This characterizes comonotonicity, that is, the upper Fr{\'e}chet--Hoeffding bound with stable tail dependence function $\ell(\bm{x})= \max \{x_1,\dots,x_d\}$.
  \item If $\bm{W}$ is a random permutation of $(d,0,\dots,0)\in\IR^d$ , then
    $\lVert\bm{x}\rVert_d = d\sum_{j=1}^{d} |x_j|/d = \sum_{j=1}^{d} |x_j|$. This characterizes independence with
    the stable dependence function $\ell(\bm{x})=x_1+ \dots + x_d$.
  \item If $\bm{W}=(W_1,\dots,W_d)$ is such that for some $0<\alpha<1$,
    $\Gamma(1-\alpha)W_j\isim \exp(-x^{-1/\alpha})$, $x\in [0,\infty)$, where $\Gamma$
    denotes the gamma function, a straightforward computation shows that
    $\lVert\bm{x}\rVert_d= (\sum_{j=1}^{d} |x_j|^{1/\alpha})^{\alpha}$. This implies
    that $\ell(\bm{x})=(\sum_{j=1}^{d} x_j^{1/\alpha})^{\alpha}$ and thus that
    the max-stable dependence structure is the Gumbel (logistic) copula with
    parameter $\alpha \in (0,1)$.
  \item If $\bm{W}$ is such that for some $\theta>0$,
    $W_j=\Gamma(1+1/\theta)W_j^{*}$ with $W_j^{*}\isim\exp(-x^{\theta})$,
    $x\in [0,\infty)$, then the stable tail dependence function can be
    calculated to be
    \begin{align*}
      \ell(\bm{x})=\sum_{\emptyset\neq J\subseteq \{1,\dots,d\} }(-1)^{|J|+1} \biggl(\,\sum_{j\in J} x_j^{-\theta}\biggr)^{-1/\theta},
    \end{align*}
    and thus the max-stable dependence structure is the negative logistic copula
    with parameter $\theta>0$; see, for example,
    \cite{dombryengelkeoesting2016}.
  \item If
    $\bm{W}=(W_1,\dots,W_d)\sim
    (\sqrt{2\pi}\max\{0,\eps_1\},\dots,\sqrt{2\pi}\max\{0,\eps_d\})$,
    where $(\eps_1,\dots,\eps_d) \sim \N_d(\bm{0},P)$ with correlation matrix
    $P$, a \emph{Schlather model} results; see \cite{schlather2002}.
  \item\label{ex:dnorm:generators:and:stdfun:6} If
    $\bm{W}=(W_1,\dots,W_d)\sim
    (\max\{0,\eps_1\}^{\nu}/c_{\nu},\dots,\max\{0,\eps_d\}^{\nu}/c_{\nu})$,
    where $(\eps_1,\dots,\eps_d) \sim \N_d(\bm{0},P)$ with correlation matrix
    $P$, $\nu>0$, and $c_{\nu}=2^{\nu/2-1}\Gamma((\nu+1)/2)/\sqrt{\pi}$,
    then the
    \emph{extremal $t$ model} of \cite{opitz2013} results; for $\nu=1$, the
    Schlather model is obtained as a special case. The stable tail dependence function
    $\ell(\bm{x})$ of the extremal $t$ model in dimension $d$ is given by
    \begin{align}
      \ell(\bm{x})=\sum_{j=1}^d x_j t_{d-1}\Bigl(\nu+1,\ P_{-j,j},\
      \frac{P_{-j,-j}-P_{-j,j}P_{j,-j}}{\nu+1}\Bigr)((\bm{x}_{-j}/x_j)^{-1/\nu}),\label{eq:et:std}
    \end{align}
    where $t_{d}(\nu,\bm{\mu},\Sigma)(\bm{x})$ denotes the $d$-variate Student $t$
    distribution function with $\nu$ degrees of freedom, location vector
    $\bm{\mu}$ and dispersion matrix $\Sigma$ evaluated at $\bm{x}$ as in
    \cite[Example~6.7]{mcneilfreyembrechts2015}, $P_{-j,-j}$ (respectively,
    $P_{-j,j}$, $P_{j,-j}$) denotes the
    submatrix obtained by removing the $j$th row and the $j$th column
    (respectively, $j$th
    row, $j$th column) from $P$ and $\bm{x}_{-j}=(x_1,\dots,x_{j-1},x_{j+1},\dots,x_d)$.
  \item\label{ex:dnorm:generators:and:stdfun:7} If
    $\bm{W}=(W_1,\dots,W_d)\sim
    (\exp(\eps_1-\sigma_1^2/2),\dots,\exp(\eps_d-\sigma_d^2/2)),$
    where $(\eps_1,\dots,\eps_d) \sim \N_d(\bm{0},\Sigma)$ for a covariance
    matrix $\Sigma$ with diagonal entries $\Sigma_{jj}=\sigma^2_j$,
    $j\in\{1,\dots,d\}$, and corresponding correlation matrix $P$ (such that
    $\Sigma_{ij}=\sigma_i\sigma_jP_{ij}$, $i,j\in\{1,\dots,d\}$), a
    \emph{Brown--Resnick model} results; see \cite{kabluchko2009}. This model
    can also be obtained as a certain limit of the extremal $t$ model when the
    degrees of freedom $\nu\to\infty$; see \cite{nikoloulopoulosjoeli2009}. The
    Brown--Resnick model is characterized by the \emph{H\"usler--Reiss copula};
    see \cite{huslerreiss1989}. Its stable dependence function $\ell(\bm{x})$ is
    available in any dimension $d$, see \cite{nikoloulopoulosjoeli2009} and
    \cite{huser2013}, and given by
    \begin{align}
      \ell(\bm{x})=\sum_{j=1}^d x_j \Phi_{d-1}(\bm{0},\Sigma_j)(\bm{\eta}_j),\label{eq:HR:std}
    \end{align}
    where $\Phi_{d}(\bm{\mu}, \Sigma)(\bm{x})$ denotes the $d$-variate
    normal distribution function with mean vector $\bm{\mu}$ and covariance matrix
    $\Sigma$ evaluated at $\bm{x}$,
    $\Sigma_j$ is the $(d-1)\times (d-1)$ covariance matrix with entries
    \begin{align*}
      \Sigma_{j,ik}=\begin{cases}
        2\gamma_{ij},&\text{if}\,\ k=i\in\{1,\dots,d\}\backslash\{j\},\\
        \gamma_{ij}+\gamma_{jk}-\gamma_{ik},&\text{if}\,\ k\neq i,
      \end{cases}
    \end{align*}
    where $\gamma_{ij}=\sigma_i^2+\sigma_j^2-\sigma_i\sigma_j P_{ij}$, and
    $\bm{\eta}_j$ is the $(d-1)$-dimensional vector with $i$th entry
    $\gamma_{ij} - \log(x_i/x_j)$.
  \item If $\bm{W}=(W_1,\dots,W_d)\sim H$ for a distribution function $H$ with
    margins $F_1,\dots,F_d$ on $[0,\infty)$ such that $\E(W_j)=1$,
    $j\in\{1,\dots,d\}$, then, by Sklar's Theorem, if $C$ denotes the copula of
    $H$, one can derive the general form of $\ell$ via
    \eqref{dnorm:correspondence}. If $\bm{U}\sim C$, then the stochastic
    representation $\bm{W}=(F_1^-(U_1),\dots,F_d^-(U_d))$ can be used to see that, for all $\bm{x}>\bm{0}$,
    \begin{align*}
      G_{\bm{x}}(y)&=\P(\underset{1\leq j\leq d}{\max}\{|x_j|W_j\}\le y)=\P(W_1\le y/x_1,\dots,W_d\le y/x_d)\\
                 &=\P(U_1\le F_1(y/x_1),\dots,U_d\le F_d(y/x_d))=C(F_1(y/x_1),\dots,F_d(y/x_d)).
    \end{align*}
    Applying the chain rule for differentiating this expression with respect to $y$ leads to the density
    \begin{align*}
      g_{\bm{x}}(y) = \sum_{j=1}^d \D_j C(F_1(y/x_1),\dots,F_d(y/x_d))f_j(y/x_j)/x_j,
    \end{align*}
    where $\D_j C(\bm{u})$ denotes the partial derivatives of $C$ with
    respect to the $j$th argument evaluated at $\bm{u}$. By \eqref{dnorm:correspondence} and the substitution $z_j=y/x_j$, we thus have
    that, for all $\bm{x}>\bm{0}$,
    \begin{align*}
      \ell(\bm{x})&=\int_0^\infty y g_{\bm{x}}(y)\,\rd y=\sum_{j=1}^d\frac{1}{x_j}\int_0^\infty y \D_j C(F_1(y/x_1),\dots,F_d(y/x_d))f_j(y/x_j)\,\rd y\\
                  &=\sum_{j=1}^dx_j\int_0^\infty z_j \D_j C(F_1(z_j x_j/x_1),\dots,F_d(z_j x_j/x_d))f_j(z_j)\,\rd z_j\\
      &=\sum_{j=1}^dx_j\E\bigl(Z_j \D_j C(F_1(Z_j x_j/x_1),\dots,F_d(Z_j x_j/x_d))\bigr),
    \end{align*}
    where $Z_1\sim F_1,\dots, Z_d\sim F_d$ are independent. This formula
    resembles \eqref{eq:et:std} and \eqref{eq:HR:std}. If required, it can be
    evaluated by Monte Carlo, for example. Note that it only poses a restriction
    on the marginal distributions (being non-negative and scalable to have mean 1), not the
    dependence of the components of $\bm{W}$.
  \end{enumerate}
\end{example}

\subsection{Hierarchical stable tail dependence functions}
Let us now turn to a construction method for HEVCs by exploiting the link
between $d$-norm generators and stable tail dependence functions established in
Section~\ref{sec:HEVC:dnorm:std}. The idea is to build stable tail dependence functions
with a hierarchical structure on the level of the associated $d$-norm
generator. Although our approach is similar in spirit to \cite{LeeJoe2017} who
recently proposed factor extreme-value copula models, the two constructions
differ.

By analogy with the construction of nested Archimedean copulas (outlined in
Section~\ref{sec:HAXC}) we define hierarchical $d$-norm generators
$\bm{W}=(W_1,\dots,W_d)$ in terms of a tree structure with $d$ leaves. Under
this framework, each component $W_j$, $j\in\{1,\dots,d\}$, is obtained as a
measurable, non-negative function $g_j$ of intermediate variables
$\{W^{*}_{k}\}_{k\in\Anc(j)}$, lying on the tree nodes along the path from the
seed $W_0^{*}$ at the root of the tree to the $j$th leaf represented by the
variable $W_j$ itself. In other words, the variable $W_j$ may be expressed in
terms of its ancestor variables identified by the index set $\Anc(j)$, some
of which may be shared with other variables $W_k$, $k\neq j$, thus inducing
dependence between the components of the vector $\bm{W}$. To fix ideas, consider the
tree represented in Figure~\ref{fig:NEVC:tree}.
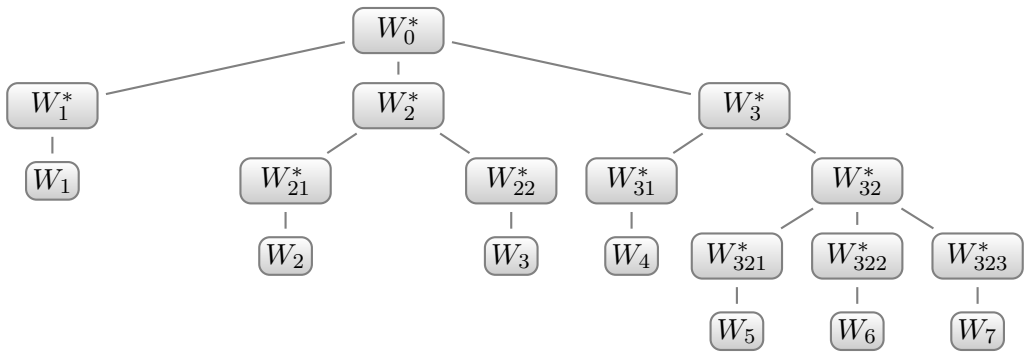
\begin{figure}[htbp]
  \centering
  \begin{tikzpicture}[
    grow = south,
    level 1/.style = {sibling distance = 46mm, level distance = 10mm},
    level 2/.style = {sibling distance = 30mm, level distance = 10mm},
    level 3/.style = {sibling distance = 16mm, level distance = 10mm},
    edge from parent/.style = {thick, draw = black!50, shorten >= 1mm, shorten <= 1mm},
    every node/.style = {rectangle, rounded corners, shade, top color = white,
      bottom color = black!20, draw = black!50, thick, inner sep = 0mm},
    mynodestyle/.style = {minimum width = 12mm, minimum height = 6mm},
    myleafstyle/.style = {minimum width = 7mm, minimum height = 5mm}
    ]
    \node[mynodestyle]{$W_0^{*}$}
    child[edge from parent/.style = {thick, draw = black!50, shorten >= 1mm, shorten <= 1mm}]{
      node[mynodestyle]{$W_{1}^{*}$}
      child[edge from parent/.style = {thick, draw = black!50, shorten >= 1mm, shorten <= 1mm}]{
               node[myleafstyle]{$W_{1}$}
             }
    }
    child[edge from parent/.style = {thick, draw = black!50, shorten >= 1mm, shorten <= 1mm}]{
      node[mynodestyle]{$W_{2}^{*}$}{
        child[edge from parent/.style = {thick, draw = black!50, shorten >= 1mm, shorten <= 1mm}]{
          node[mynodestyle]{$W_{21}^{*}$}
          child[edge from parent/.style = {thick, draw = black!50, shorten >= 1mm, shorten <= 1mm}]{
               node[myleafstyle]{$W_{2}$}
             }
        }
        child[edge from parent/.style = {thick, draw = black!50, shorten >= 1mm, shorten <= 1mm}]{
          node[mynodestyle]{$W_{22}^{*}$}
          child[edge from parent/.style = {thick, draw = black!50, shorten >= 1mm, shorten <= 1mm}]{
               node[myleafstyle]{$W_{3}$}
             }
        }
      }
    }
    child[edge from parent/.style = {thick, draw = black!50, shorten >= 1mm, shorten <= 1mm}]{
      node[mynodestyle]{$W_{3}^{*}$}{
        child[edge from parent/.style = {thick, draw = black!50, shorten >= 1mm, shorten <= 1mm}]{
          node[mynodestyle]{$W_{31}^{*}$}
          child[edge from parent/.style = {thick, draw = black!50, shorten >= 1mm, shorten <= 1mm}]{
               node[myleafstyle]{$W_{4}$}
             }
        }
        child[edge from parent/.style = {thick, draw = black!50, shorten >= 1mm, shorten <= 1mm}]{
          node[mynodestyle]{$W_{32}^{*}$}{
            child[edge from parent/.style = {thick, draw = black!50, shorten >= 1mm, shorten <= 1mm}]{
              node[mynodestyle]{$W_{321}^{*}$}
              child[edge from parent/.style = {thick, draw = black!50, shorten >= 1mm, shorten <= 1mm}]{
               node[myleafstyle]{$W_{5}$}
             }
            }
            child[edge from parent/.style = {thick, draw = black!50, shorten >= 1mm, shorten <= 1mm}]{
              node[mynodestyle]{$W_{322}^{*}$}
              child[edge from parent/.style = {thick, draw = black!50, shorten >= 1mm, shorten <= 1mm}]{
               node[myleafstyle]{$W_{6}$}
             }
            }
            child[edge from parent/.style = {thick, draw = black!50, shorten >= 1mm, shorten <= 1mm}]{
              node[mynodestyle]{$W_{323}^{*}$}
              child[edge from parent/.style = {thick, draw = black!50, shorten >= 1mm, shorten <= 1mm}]{
               node[myleafstyle]{$W_{7}$}
             }
            }
          }
        }
      }
    };
  \end{tikzpicture}
  \caption{Tree representation of a hierarchical $d$-norm generator with $d=7$ for the construction of a HEVC.}
  \label{fig:NEVC:tree}
\end{figure}
In this case, one has, for example, $W_2=g_2(W_0^{*},W_{2}^{*},W_{21}^{*})$ and $W_7=g_7(W_0^{*},W_{3}^{*},W_{32}^{*},W_{323}^{*})$.
To define a valid $d$-norm generator, we need to assume that this system of
variables and the corresponding functions $g_j$ are such that $\E(W_j)=1$ for each
$j\in\{1,\dots,d\}$. However, there is no further restriction on the
dependence structure of these latent variables, which yields a very general
framework.

The inherent hierarchical structure of the $d$-norm generator defined in this
way carries over to the EVC derived from \eqref{stail}. Such hierarchical
$d$-norm generators yield HEVCs.

We now describe several example models of HEVCs constructed using this general
framework. We first consider the well known nested Gumbel copula and show that it arises as HEVCs in our framework; see \cite{mcfadden1978}, \cite{tawn1990} and \cite{stephenson2003} for early references. Nested Gumbel (or logistic) copulas have
been applied in a variety of applications, such as \cite{hofertscherer2011} in
the realm of pricing collateralized debt obligations or
\cite{vettorihusergenton2017} where they are used to group various air
pollutants into clusters with homogeneous extremal dependence strength.

\clearpage
\begin{example}[Nested Gumbel copulas with two nesting levels]\label{ex:2:level:NAGC}
  For $0<\alpha_1,\dots,\alpha_S\leq\alpha_0\le 1$, consider independent random
  variables organized in $S$ groups:
  \begin{alignat*}{2}
    \text{Root:}& \quad & W_0^{*}&=1,\\
    \text{Level~1:}& \quad & W_s^{*}&\isim\PS(\alpha_s/\alpha_0),\quad s\in\{1,\dots,S\},\\
    \text{Level~2:}& \quad & W_{sj}^{*}&\isim\exp(-x^{-1/\alpha_{s}}),\
    x>0,\quad s\in \{1,\dots,S\},\ j\in\{1,\dots,d_s\}.
  \end{alignat*}
  As outlined above, the leaves of the tree correspond to the $d$-norm generator
  $\bm{W}=(\bm{W}_1,\dots,\bm{W}_S)$, with $\bm{W}_s=(W_{s1},\dots,W_{sd_s})$,
  $s\in\{1,\dots,S\}$, with $d=\sum_{s=1}^Sd_s$, where
  \begin{align}
    W_{sj} = g_{sj}(W_0^{*},W_s^{*},W_{sj}^{*})=\frac{W_s^{*\alpha_s}W_{sj}^{*}}{\Gamma(1-\alpha_0)},\quad s \in
    \{1,\dots,S\},\ j\in \{1,\dots,d_s\}.\label{NGC:dnorm:generator}
  \end{align}
  It can be verified that, indeed, $W_{sj}\ge 0$ and $\E(W_{sj})=1$ for all $s$
  and $j$. Then, the stable tail dependence function corresponding to the
  $d$-norm generator $\bm{W}$ is given by
  \begin{align}
    \ell(\bm{x})=\ell_{\alpha_0}(\ell_{\alpha_1}(\bm{x}_1),\dots,\ell_{\alpha_S}(\bm{x}_S)),\label{eq:stabletaildep:NestedGumbel}
  \end{align}
  where $\bm{x}=(\bm{x}_1,\dots,\bm{x}_S)$, $\bm{x}_s=(x_{s1},\dots,x_{sd_s})$,
  $s\in\{1,\dots,S\}$, and
  $\ell_\alpha(x_1,\dots,x_d)=(\sum_{j=1}^d x_j^{1/\alpha})^\alpha$ is the
  stable tail dependence function of a Gumbel copula with parameter $\alpha$.
  \begin{proof}
    It directly follows from \eqref{dnorm:correspondence} that
    \begin{align*}
      \ell(\bm{x})=\E(\max_{1 \le s \le S}\{\max_{1 \le j \le d_s}\{x_{sj}W_{sj} \}\}),\quad \bm{x}\in[0,\infty)^d.
    \end{align*}
    By \eqref{NGC:dnorm:generator} and with $Y_{\bm{x}}=\max_{1 \le s \le S}\{\max_{1 \le j \le
      d_s}\{x_{sj}W_{s}^{*\alpha_s}W_{sj}^{*} \}\}$, one obtains that
    \begin{align*}
      \ell(\bm{x})&=\frac{1}{\Gamma(1-\alpha_0)} \E\bigl(\max_{1 \le s \le
                    S}\{\max_{1 \le j \le d_s}\{x_{sj}W_{s}^{*\alpha_s}W_{sj}^{*}
                    \}\}\bigr)=\frac{1}{\Gamma(1-\alpha_0)}\E(Y_{\bm{x}})\\
                  &=\frac{1}{\Gamma(1-\alpha_0)} \int_0^{\infty}
                    \P(Y_{\bm{x}}>y)\,\rd y,\quad \bm{x}\in[0,\infty)^d.
    \end{align*}
    Conditioning on $W_s^{*}$, $s \in \{1,\dots,S\}$, we obtain that
    \begin{align*}
      \P(Y_{\bm{x}}\leq y)& =\P(W_{sj}^{*} \leq y/(x_{sj}W_s^{*\alpha_s}),\
                            s\in\{1,\dots,S\},\ j\in\{1,\dots,d_s\})\\
                          &=\E\biggr(\prod_{s=1}^{S}\prod_{j=1}^{d_s} \exp \bigr(\bigr(-\frac {y}{x_{sj}W_s^{*\alpha_s}}\bigr)^{\frac{1}{\alpha_s}}\bigr)\biggr)=\prod_{s=1}^{S} \E\biggr(\,\exp(-W_s^{*}\sum_{j=1}^{d_s}\bigg(\frac{y}{x_{sj}}\bigg)^{-\frac{1}{\alpha_s}}\biggr),
    \end{align*}
    where the last equality holding since $W^{*}_{1},\dots,W^{*}_{S}$ are
    independent. Since $W_s^{*}\sim \PS(\alpha_s/\alpha_0)$, this leads to
    \begin{align*}
      \P(Y_{\bm{x}}\leq y)=\prod_{s=1}^{S} \exp\biggr(-\bigg(\sum_{j=1}^{d_s} \bigg(\frac{y}{x_{sj}}\bigg)^{-\frac{1}{\alpha_s}}\bigg)^\frac{\alpha_s}{\alpha_0}\biggr)
      =\exp\biggr(-y^{-\frac{1}{\alpha_0}}\biggr(\sum_{s=1}^{S}\bigg(\sum_{j=1}^{d_s} ({x_{sj}^{\frac{1}{\alpha_s}}})\bigg)^{\frac{\alpha_s}{\alpha_0}}\biggr)\biggr).
    \end{align*}
    With $t=\sum_{s=1}^{S}(\sum_{j=1}^{d_s}
    ({x_{sj}^{\frac{1}{\alpha_s}}}))^{\frac{\alpha_s}{\alpha_0}}$, the
    substitution $z=y^{-\frac{1}{\alpha_0}}t$, and integration by parts, the stable tail dependence function
    is thus
    \begin{align*}
      \ell(\bm{x})&=\frac{1}{\Gamma(1-\alpha_0)} \int_0^{\infty}
                    (1-\exp(-y^{-\frac{1}{\alpha_0}}t))\,\rd y=\frac{t^{\alpha_0}}{\Gamma(1-\alpha_0)}
                    \int_0^{\infty} (1-\exp(-z))\alpha_0 z^{-\alpha_0-1}\,\rd z\\
                  &=\frac{t^{\alpha_0}}{\Gamma(1-\alpha_0)} \int_0^{\infty} z^{-\alpha_0} \exp(-z)\,\rd z
                    = \frac{t^{\alpha_0}}{\Gamma(1-\alpha_0)} \Gamma(1-\alpha_0)=
                    t^{\alpha_0}\\
                  &=\biggr(\,\sum_{s=1}^{S}\biggl(\,\sum_{j=1}^{d_s}
                    {x_{sj}^{\frac{1}{\alpha_s}}}\biggr)^{\frac{\alpha_s}{\alpha_0}}\biggr)^{\alpha_0}=\ell_{\alpha_0}(\ell_{\alpha_1}(\bm{x}_1),\dots,\ell_{\alpha_S}(\bm{x}_S)),
    \end{align*}
    which is the stable tail dependence function of a nested Gumbel copula constructed by nesting on the level of the $d$-norms.
  \end{proof}
\end{example}

The construction underlying Example~\ref{ex:2:level:NAGC} may easily be
generalized to trees with arbitrary nesting levels using the
same line of proof. The construction, extending \cite{stephenson2003}, is
outlined in Example~\ref{example:NGumbel2}.

\begin{example}[Nested Gumbel copulas with arbitrary nesting
  levels]\label{example:NGumbel2}
  To construct a nested Gumbel copula with arbitrary nesting levels, we
  mimic the construction with two nesting levels in
  Example~\ref{ex:2:level:NAGC}. Let $p_j$ be the path starting from the root
  of the tree and leading to the $j$th leaf representing the $d$-norm generator
  component $W_j$. We can write the corresponding node variables along this path as
  $W_0^{*}, W_{p_j(1)}^{*}, W_{p_j(2)}^{*}, \dots, W_{p_j(L_j)}^{*}, W_j$,
  where $L_j$ denotes the number of intermediate variables (or levels) between
  $W^{*}_0$ and $W_j$. Assume that all latent variables $W_{p_j(k)}^{*}$,
  $j\in\{1,\dots,d\}$, $k\in\{1,\dots,L_j\}$, are mutually independent within
  and across paths, and that
  \begin{alignat*}{2}
    \text{Root:} &\quad & W_0^{*}&=1,\\
    \text{Level $1$:} &\quad & W_{p_j(1)}^{*}&\sim \PS(\alpha_{p_j(1)}/\alpha_0),\\
    \text{Level $k$:} &\quad & W_{p_j(k)}^{*}&\sim \PS(\alpha_{p_j(k)}/\alpha_{p_j(k-1)}),\quad k\in\{2,\dots,L_j-1\},\\
    \text{Level $L_j$:} &\quad& W_{p_j(L_j)}^{*}&\sim \exp(-x^{-1/\alpha_{p_j(L_j-1)}}),\quad x>0,
  \end{alignat*}
  where, for each path $p_j$, the parameters of the
  positive $\alpha$-stable variables on this path are ordered as
  $0<\alpha_{p_j(L_j-1)}\le\dots\le\alpha_{p_j(1)}\le\alpha_0<1$.
  We can then construct the component $W_j$ of the $d$-norm generator via
  \begin{align*}
    W_j=g_{j}(W_0^{*},W_{p_j(1)}^{*},\dots,W_{p_j(L_j)}^{*})=\frac{W_{p_j(1)}^{{*}\alpha_{p_j(1)}}\cdots
    W_{p_j(L_j-1)}^{{*}\alpha_{p_j(L_j-1)}}W_{p_j(L_j)}^{*}}{\Gamma(1-\alpha_0)},\quad j\in\{1,\dots,d\}.
  \end{align*}
  By recursively conditioning on the variables along each path, one can show
  that the resulting $d$-norm generator corresponds to the nested Gumbel copula
  based on the same tree structure and that its stable tail dependence function
  can be obtained by applying \eqref{eq:stabletaildep:NestedGumbel} recursively
  at each nesting level of the tree.
\end{example}

The construction principle for hierarchical $d$-norm generators also allows us to
construct the following two HEVCs.

\begin{example}[Hierarchical H\"usler--Reiss copula]\label{example:HierarchicalHuslerReiss}
  For simplicity, consider the two-level case
  \begin{alignat*}{2}
  \text{Root:} &\quad & W_0^{*}&=1,\\
  \text{Level~1:} &\quad & (W_1^{*},\dots,W_S^{*})&\sim \N_S(\bm{0},\Sigma_0),\\
  \text{Level~2:} &\quad & (W_{s1}^{*},\dots,W_{sd_s}^{*})&\sim \N_{d_s}(\bm{0},\Sigma_s),\quad s\in \{1,\dots,S\},
  \end{alignat*}
  where the vectors $(W_1^{*},\dots,W_S^{*})$ and
  $(W_{s1}^{*},\dots,W_{sd_s}^{*})$, $s\in \{1,\dots,S\}$, are
  independent. Furthermore, assume that the covariance matrix
  $\Sigma_0$ may be expressed in terms of the variances
  $\sigma^{{*}2}_1,\dots,\sigma^{{*}2}_S$ and the correlation matrix $P_0$
  via $\Sigma_{0,ik}=\Cov(W_i^{*},W_k^{*})=\sigma^{{*}}_i\sigma^{{*}}_kP_{0,ik}$. Similarly,
  denote by $\sigma^{{*}2}_{s1},\dots,\sigma^{{*}2}_{sd_s}$ and $P_s$ the
  respective quantities for the vector $(W_{s1}^{*},\dots,W_{sd_s}^{*})$,
  $s\in \{1,\dots,S\}$. Writing the $d$-norm generator as
  $\bm{W}=(\bm{W}_1,\dots,\bm{W}_S)$, with $\bm{W}_s=(W_{s1},\dots,W_{sd_s})$, $s\in\{1,\dots,S\}$,
  as in Example~\ref{ex:2:level:NAGC}, we define the components by
  \begin{align}
    W_{sj}=\exp((W_s^{*}+W_{sj}^{*})-(\sigma_s^{{*}2}+\sigma_{sj}^{{*}2})/2),\quad s\in \{1,\dots,S\},\ j\in\{1,\dots,d_s\}.\label{eq:HierarchicalHuslerReiss}
  \end{align}
  It is immediate from Part~\ref{ex:dnorm:generators:and:stdfun:7} of
  Example~\ref{ex:dnorm:generators:and:stdfun} and by writing
  $\eps_{sj}=W_s^{*}+W_{sj}^{*}$ that the resulting extreme-value distribution
  has the H\"usler--Reiss copula as underlying dependence structure. It is
  characterized by an overall dispersion matrix $\Sigma$ whose entries are given by
  \begin{align*}
   \Cov(\eps_{s_1j_1},\eps_{s_2j_2})=\begin{cases}
   \Sigma_{0,s_1s_1}+\Sigma_{s_1,j_1j_2}=\sigma_{s_1}^{{*}2}+\sigma_{s_1j_1}^{{*}}\sigma_{s_1j_2}^{{*}}P_{s_1,j_1j_2},&s_1=s_2\
   \text{(same groups)},\\
   \Sigma_{0,s_1s_2}=\sigma_{s_1}^{{*}}\sigma_{s_2}^{{*}}P_{0,s_1s_2},&s_1\neq
   s_2\ \text{(different groups)}.\\
  \end{cases}
  \end{align*}
  Hence, in this case, the underlying hierarchical $d$-norm generator results in
  a hierarchical structure of the covariance matrix $\Sigma$ and the
  corresponding stable tail dependence function is of the same form. It is
  straightforward to verify that this hierarchical structure allows to model
  stronger dependence within groups than between groups. This simple two-level
  example can easily be generalized to trees with arbitrary nesting
  levels, and it could be interesting for spatial modeling, where different
  homogeneous regions exhibit different extreme-value behaviors.
\end{example}

\begin{example}[Hierarchical extremal $t$ and Schlather copula]
  Example~\ref{example:HierarchicalHuslerReiss} can be adapted to a hierarchical
  extremal $t$ model by replacing \eqref{eq:HierarchicalHuslerReiss} by
  \begin{align*}
    W_{sj}=\max\{0,(W_s^{*}+W_{sj}^{*})/(\sigma_s^{{*}2}+\sigma_{sj}^{{*}2})^{1/2}\}^\nu/c_\nu,\quad s\in \{1,\dots,S\},\ j\in\{1,\dots,d_s\},
  \end{align*}
  where $\nu>0$ is the degrees of freedom and $c_\nu$ is the same constant
  appearing in Part~\ref{ex:dnorm:generators:and:stdfun:6} of
  Example~\ref{ex:dnorm:generators:and:stdfun}. For $\nu=1$, we obtain a
  hierarchical Schlather model.
\end{example}

\section{Hierarchical Archimax copulas}\label{sec:HAXC}
\subsection{Archimax copulas}
According to \cite{caperaafougeresgenest2000} and
\cite{charpentierfougeresgenestneslehova2014}, a copula is an \emph{Archimax
  copula (AXC)} if it admits the form
\begin{align}
  C(\bm{u})=\psi(\ell(\psii(u_1),\dots,\psii(u_d))),\quad\bm{u}\in[0,1]^d,\label{eq:AXC}
\end{align}
for an Archimedean generator $\psi\in\Psi$ and a stable tail dependence function
$\ell$; note that the form \eqref{eq:AXC} in $d$ dimensions was originally conjectured
in \cite{mesiarjagr2013}. In what follows, we focus on the case where $\psi$ is
completely monotone. Since $\psi(0)=1$, Bernstein's Theorem, see
\cite{bernstein1928} or \cite[p.~439]{feller1971}, implies that $\psi$ is the
Laplace--Stieltjes transform of a distribution function $F$ on the positive real
line, that is, $\psi(t)=\LS[F](t)=\int_0^\infty\exp(-tx)\,\rd F(x)$,
$t\in[0,\infty)$, in this case. A stochastic representation for $\bm{U}\sim C$
is given by
\begin{align}
  \bm{U}=\biggl(\psi\biggl(\frac{E_1}{V}\biggr),\dots,\psi\biggl(\frac{E_d}{V}\biggr)\biggr)=\biggl(\psi\biggl(\frac{-\log
  Y_1}{V}\biggr),\dots,\psi\biggl(\frac{-\log Y_d}{V}\biggr)\biggr)\sim C,\label{eq:stoch:rep:AXC}
\end{align}
where $(E_1,\dots,E_d)=(-\log Y_1,\dots,-\log Y_d)$ (which has $\Exp(1)$
margins) for $\bm{Y}=(Y_1,\dots,$ $Y_d)\sim D$ for a $d$-dimensional EVC $D$ with
stable tail dependence function $\ell$ and $V\sim F=\LSi[\psi]$ is the frailty
in the construction (which is independent of
$\bm{Y}$). %
Note that, as a special case, if $D$ is the independence copula, in
other words $\ell(\bm{x})=\sum_{j=1}^dx_j$, then $C$ in \eqref{eq:AXC} is
Archimedean. Moreover, if $\psi(t)=\exp(-t)$, $t\geq0$, then $C$ in
\eqref{eq:AXC} is an EVC with stable tail dependence function $\ell$ (compare
with \eqref{eq:gen:form:EVC}) and $\bm{U}=\bm{Y}$, so $C=D$. Although not
relevant for the remainder of this paper, but important for statistical
applications, let us mention that, if it exists, the density of an AXC allows
for a rather explicit form (derived in Proposition~\ref{prop:AXC:dens}) which
makes computing the logarithmic density numerically feasible (see
Proposition~\ref{prop:AXC:log:dens:eval}).

\subsection{Two ways of inducing hierarchies}\label{sec:two:ways:HAXCs}
There are two immediate ways to introduce a hierarchical structure on Archimax
copulas following from \eqref{eq:stoch:rep:AXC}, thus leading to
\emph{hierarchical Archimax copulas (HAXCs)}: At the level of the EVC $D$
through its stable tail dependence function (via $d$-norms) and at the level of
the frailty $V$ by using a sequence of dependent frailties instead of a single
$V$. Since the former was addressed in Section~\ref{sec:HEVC}, we now focus on
the latter.

Let $D$ be a $d$-dimensional EVC with stable tail dependence function
$\ell$ as before. The stochastic representation \eqref{eq:stoch:rep:AXC} can be
generalized by replacing the single frailty $V$ by a sequence of dependent
frailties. Their hierarchical structure and dependence is best described in
terms of a concrete example. To this end, consider Figure~\ref{fig:NAC:tree}.
\begin{figure}[htbp]
  \centering
  \begin{tikzpicture}[
    grow = south,
    level 1/.style = {sibling distance = 36mm, level distance = 10mm},
    level 2/.style = {sibling distance = 20mm, level distance = 10mm},
    level 3/.style = {sibling distance = 16mm, level distance = 10mm},
    edge from parent/.style = {thick, draw = black!50, shorten >= 1mm, shorten <= 1mm},
    every node/.style = {rectangle, rounded corners, shade, top color = white,
      bottom color = black!20, draw = black!50, thick, inner sep = 0mm},
    mynodestyle/.style = {minimum width = 12mm, minimum height = 6mm},
    myleafstyle/.style = {minimum width = 7mm, minimum height = 5mm}
    ]
    \node[mynodestyle]{$V_0$}
    child[edge from parent/.style = {thick, draw = black!50, shorten >= 1mm, shorten <= 1mm}]{
      node[myleafstyle]{$E_1$}
    }
    child[edge from parent/.style = {thick, draw = black!50, shorten >= 1mm, shorten <= 1mm}]{
      node[mynodestyle]{$V_{01}$}{
        child[edge from parent/.style = {thick, draw = black!50, shorten >= 1mm, shorten <= 1mm}]{
          node[myleafstyle]{$E_2$}
        }
        child[edge from parent/.style = {thick, draw = black!50, shorten >= 1mm, shorten <= 1mm}]{
          node[myleafstyle]{$E_3$}
        }
      }
    }
    child[edge from parent/.style = {thick, draw = black!50, shorten >= 1mm, shorten <= 1mm}]{
      node[mynodestyle]{$V_{02}$}{
        child[edge from parent/.style = {thick, draw = black!50, shorten >= 1mm, shorten <= 1mm}]{
          node[myleafstyle]{$E_4$}
        }
        child[edge from parent/.style = {thick, draw = black!50, shorten >= 1mm, shorten <= 1mm}]{
          node[mynodestyle]{$V_{23}$}{
            child[edge from parent/.style = {thick, draw = black!50, shorten >= 1mm, shorten <= 1mm}]{
              node[myleafstyle]{$E_5$}
            }
            child[edge from parent/.style = {thick, draw = black!50, shorten >= 1mm, shorten <= 1mm}]{
              node[myleafstyle]{$E_6$}
            }
            child[edge from parent/.style = {thick, draw = black!50, shorten >= 1mm, shorten <= 1mm}]{
              node[myleafstyle]{$E_7$}
            }
          }
        }
      }
    };
  \end{tikzpicture}
  \caption{Tree representation of hierarchical frailties for the construction of a HAXC.}
  \label{fig:NAC:tree}
\end{figure}
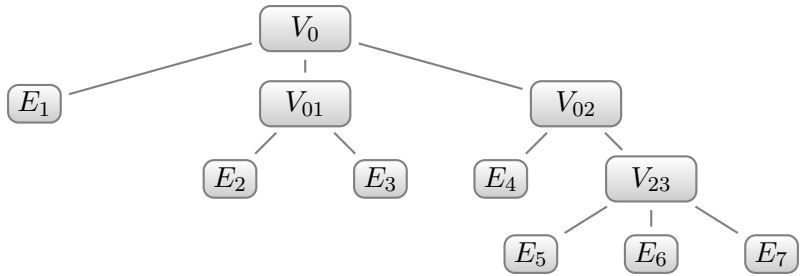
The hierarchical frailties are shown as nodes and the corresponding (dependent) $\Exp(1)$ random variables
as leaves. The frailty at each level is drawn from a distribution on the
positive real line which depends on the frailty from one level before: First
$V_0\sim F_0$ is drawn; then, independently of each other,
$V_{01}\sim F_{01}(\cdot;V_0)$ and $V_{02}\sim F_{02}(\cdot;V_0)$ are drawn
(note that $V_0$ thus acts as a parameter on the distributions $F_{01}$ of
$V_{01}$ and $F_{02}$ of $V_{02}$); finally, $V_{23}\sim F_{23}(\cdot;V_{02})$
is drawn. This procedure can easily be generalized (level by level) to more
hierarchical levels if so desired. Similar to the Archimax case, if
$(E_1,\dots,E_7)$ has EVC $D$ and $\Exp(1)$ margins, one considers
\begin{align}
  \biggl(\frac{E_1}{V_0},\frac{E_2}{V_{01}},\frac{E_3}{V_{01}},\frac{E_4}{V_{02}},\frac{E_5}{V_{23}},\frac{E_6}{V_{23}},\frac{E_7}{V_{23}}\biggr)\label{eq:HAXC:X}
\end{align}
and the survival copula of this random vector is then the HAXC
$C$. For the latter step one needs the marginal survival functions of this
random vector which are typically not known explicitly. However, they are known
under the so-called \emph{sufficient nesting condition} which is based on
certain Laplace--Stieltjes transforms involved and which is also utilized in the
construction of nested Archimedean copulas (NACs); see, for example,
\cite[pp.~87]{joe1997}, \cite{mcneil2008} or \cite{hofert2011a}. To introduce
these Laplace--Stieltjes transforms, it is convenient to have the construction
principle of NACs in mind. The NAC corresponding to Figure~\ref{fig:NAC:tree} is
given by $C_0\bigl(u_1, C_1(u_2, u_3), C_2(u_4, C_3(u_5, u_6, u_7))\bigr)$,
where $C_k$ is generated by the completely monotone generator $\psi_k$,
$k\in\{0,1,2,3\}$. For this case, the sufficient nesting condition requires the
appearing \emph{nodes} $\psi_0^{-1}\circ\psi_1$, $\psi_0^{-1}\circ\psi_2$ and
$\psi_2^{-1}\circ\psi_3$ in NAC to have completely monotone derivatives; see
\cite{hofert2010c} for examples and general results when this holds.
This implies that the functions
$\psi_{kl}(t;v)=\exp\bigl(-v\psi_k^{-1}(\psi_l(t))\bigr)$, $t \in [0,\infty)$,
$v\in(0,\infty)$, for $(k,l)=(0,1)$, $(k,l)=(0,2)$ and $(k,l)=(2,3)$ are
completely monotone generators for every $v$; see \cite[p.~441]{feller1971}. As
such, by Bernstein's Theorem, they correspond to distribution functions on the
positive real line. The important part now is that if the frailties $V_0$,
$V_{01}$, $V_{02}$ and $V_{23}$ are chosen level-by-level such that
\begin{enumerate}
\item $V_0\sim F_0=\LSi[\psi_0]$;
\item $V_{01}\,|\,V_0\sim F_{01}=\LSi[\psi_{01}(\cdot\,;V_0)]$ and $V_{02}\,|\,V_0\sim
  F_{02}=\LSi[\psi_{02}(\cdot\,;V_0)]$; and
\item $V_{23}\,|\,V_{02}\sim F_{23}=\LSi[\psi_{23}(\cdot\,;V_{02})]$,
\end{enumerate}
then, by following along the lines as described in \cite{hofert2012b}, one can show that the corresponding
HAXC has the stochastic representation
\begin{align}
  \bm{U}=\biggl(\psi_0\biggl(\frac{E_1}{V_0}\biggr),
  \psi_1\biggl(\frac{E_2}{V_{01}}\biggr), \psi_1\biggl(\frac{E_3}{V_{01}}\biggr),
  \psi_2\biggl(\frac{E_4}{V_{02}}\biggr), \psi_3\biggl(\frac{E_5}{V_{23}}\biggr),
  \psi_3\biggl(\frac{E_6}{V_{23}}\biggr), \psi_3\biggl(\frac{E_7}{V_{23}}\biggr)\biggr).\label{eq:HAXC:stochrep}
\end{align}
By comparison with \eqref{eq:HAXC:X}, we see that if the distribution functions
$F_0$, $F_{01}$, $F_{02}$, $F_{23}$ of $V_0\sim F_0$,
$V_{01}\sim F_{01}(\cdot;V_0)$, $V_{02}\sim F_{02}(\cdot;V_0)$,
$V_{23}\sim F_{23}(\cdot;V_{02})$ are chosen such that the Laplace--Stieltjes
transforms $\psi_0$, $\psi_1$, $\psi_2$, $\psi_3$ (associated to
$V_0,V_{01},V_{02},V_{23}$ via the structure of a NAC) satisfy the sufficient
nesting condition, then the marginal survival functions of \eqref{eq:HAXC:X} are
not only known, but they are equal to $\psi_0,\psi_1,\psi_2,\psi_3$ such that
the resulting HAXC has a stochastic representation (see
\eqref{eq:HAXC:stochrep}) similar to that of a HAXC with single fraility (see
\eqref{eq:stoch:rep:AXC}), just with different frailties.

\begin{remark}
  \begin{enumerate}
  \item Clearly, the stochastic representation of a HAXC based on hierarchical
    frailties as in \eqref{eq:HAXC:stochrep} immediately allows for a sampling
    algorithm.  The hierarchical frailties involved can easily be sampled in
    many cases, see \cite{hofert2010c} or the \R\ package \texttt{copula} of
    \cite{copula} for details.
  \item Note that the stochastic representation of a HAXC constructed with
    hierarchical frailties equals that of a NAC, except for the fact that for
    the latter, the EVC $D$ of $(E_1,\dots,E_7)$ is the independence
    copula.
  \item The two types of constructing HAXCs presented here can also be mixed,
    one can use a HEVC and hierarchical frailties. Interestingly, the two types
    of hierarchies introduced this way do not have to coincide; see the
    following section for such an example.
  \end{enumerate}
\end{remark}

The figures shown in the following examples can all be reproduced with the vignette \texttt{HAXC} of the \R\ package \texttt{copula}.
\begin{example}[ACs vs AXCs vs NACs vs (different) HAXCs]\label{ex:splom:1}
  Figure~\ref{fig:scatter:plots} shows scatter-plot matrices of five-dimensional copula
  samples of size 1000 from the following models for $\bm{U}=(U_1,\dots,U_5)\sim
  C$.
  \begin{enumerate}
  \item\label{ex:splom:1:1} Top left: (Archimedean) Clayton copula with stochastic representation
    \begin{align}
      \bm{U}=\biggl(\psi\biggl(\frac{E_1}{V}\biggr),\dots,\psi\biggl(\frac{E_5}{V}\biggr)\biggr),\label{eq:ex:stoch:rep:AC}
    \end{align}
    where $V\sim\Gamma(1/\theta,1)$ for $\theta=4/3$ (the frailty is gamma
    distributed) and $E_1,\dots,E_5\isim\Exp(1)$; see also
    \eqref{eq:stoch:rep:AC}. The copula parameter is chosen such that Kendall's
    tau equals 0.4.
  \item Top right: AXC based on Clayton's family with gamma frailties recycled from
    the top left plot and stochastic representation as in
    \eqref{eq:ex:stoch:rep:AC} where
    $(E_1,\dots,E_5)=(-\log Y_1,\dots,-\log Y_5)$ for $(Y_1,\dots,Y_5)$ having
    a Gumbel EVC (with parameter such that Kendall's tau equals 0.5); note that
    the margins of $(E_1,\dots,E_5)$ are again $\Exp(1)$ (but its components are
    dependent in this case).
  \item\label{ex:splom:1:3} Middle left: NAC based on Clayton's family with hierarchical frailties such that two sectors of sizes 2 and 3 result,
    respectively, with parameters ($\theta_0,\theta_1,\theta_2$) chosen such that Kendall's tau equals 0.2
    between the two sectors, 0.4 within the first sector and 0.6 within the second
    sector. A stochastic representation for this copula is given by
    \begin{align}
      \bm{U}=\biggl(\psi_1\biggl(\frac{E_1}{V_{01}}\biggr),\psi_1\biggl(\frac{E_2}{V_{01}}\biggr),\psi_2\biggl(\frac{E_3}{V_{02}}\biggr),
      \psi_2\biggl(\frac{E_4}{V_{02}}\biggr), \psi_2\biggl(\frac{E_5}{V_{02}}\biggr)\biggr),\label{eq:ex:stoch:rep:NAC}
    \end{align}
    where $V_0\sim\Gamma(2)$ and $V_{01}\,|\,V_0\sim
    F_{01}=\LSi\bigl[\exp\bigl(-V_0((1+t)^{\theta_0/\theta_1}-1)\bigr)\bigr]$,
    $V_{02}\,|\,V_0\sim
    F_{02}=\LSi\bigl[\exp\bigl(-V_0((1+t)^{\theta_0/\theta_2}-1)\bigr)\bigr]$
    are independent (see \cite[Theorem~3.6]{hofert2011a} for more details) and
    $E_1,\dots,E_5\isim\Exp(1)$.
  \item Middle right: HAXC based on Clayton's family with hierarchical
    frailties recycled from the middle left plot and stochastic representation
    as in \eqref{eq:ex:stoch:rep:NAC} where
    $(E_1,\dots,E_5)=(-\log Y_1,\dots,-\log Y_5)$ for $(Y_1,\dots,Y_5)$ having
    a Gumbel EVC (realizations recycled from the top right plot). Note that
    the hierarchical structure is only induced by the frailties in this case.
  \item Bottom left: HAXC based on Clayton's family with hierarchical frailties recycled from the middle left plot and
    stochastic representation
    \begin{align*}
      \bm{U}=\biggl(\psi_1\biggl(\frac{E_{11}}{V_{01}}\biggr),\psi_1\biggl(\frac{E_{12}}{V_{01}}\biggr),\psi_2\biggl(\frac{E_{21}}{V_{02}}\biggr),
      \psi_2\biggl(\frac{E_{22}}{V_{02}}\biggr), \psi_2\biggl(\frac{E_{23}}{V_{02}}\biggr)\biggr),
    \end{align*}
    where
    $(E_{11}, E_{12}, E_{21}, E_{22}, E_{23})=(-\log Y_{11},-\log Y_{12},-\log
    Y_{21},-\log Y_{22},-\log Y_{23})$ for
    $(Y_{11},Y_{12},Y_{21},Y_{22},Y_{23})$ having a nested Gumbel EVC (with
    sector sizes 2 and 3 and parameters such that Kendall's tau equals 0.2
    between the two sectors, 0.5 within the first sector and 0.7 within the
    second sector). Note that the hierarchical structure is induced both at the level of the
    frailties and at the level of the EVC in this case, and that the hierarchical structure
    (sectors, sector dimensions) is the same.
  \item Bottom right: HAXC as in the bottom left plot (realizations recycled) with stochastic representation
    \begin{align*}
      \bm{U}=\biggl(\psi_1\biggl(\frac{E_{11}}{V_{01}}\biggr),\psi_1\biggl(\frac{E_{12}}{V_{01}}\biggr),\psi_1\biggl(\frac{E_{21}}{V_{01}}\biggr),
      \psi_2\biggl(\frac{E_{22}}{V_{02}}\biggr), \psi_2\biggl(\frac{E_{23}}{V_{02}}\biggr)\biggr).
    \end{align*}
    Note that the hierarchical structure for the frailties (sector sizes 3 and 2, respectively) and for the
    nested Gumbel EVC (sector sizes 2 and 3, respectively) differ in this case.
  \end{enumerate}
  \begin{figure}[htbp]
    \centering
    \includegraphics[width=0.37\textwidth]{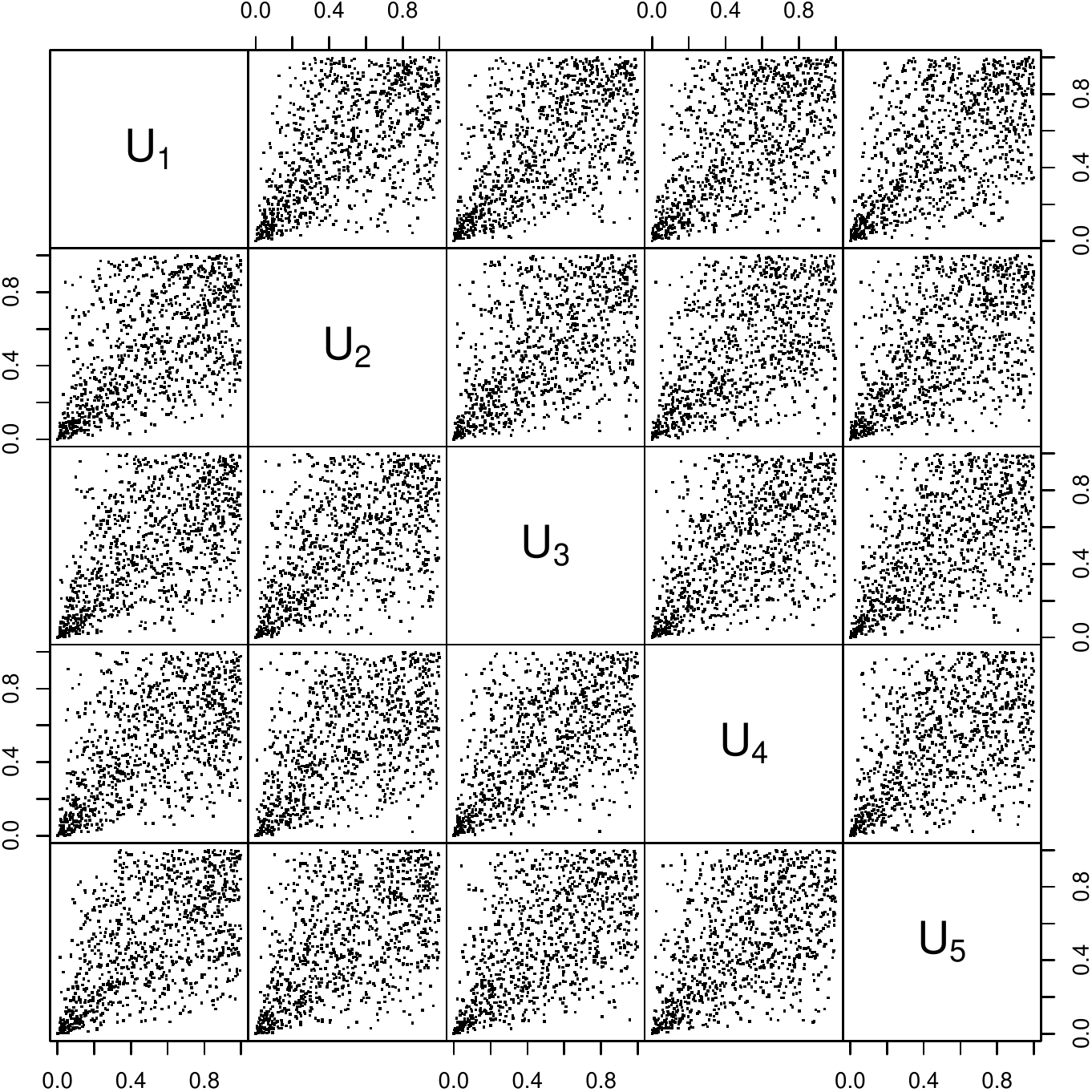}%
    \hspace{6mm}
    \includegraphics[width=0.37\textwidth]{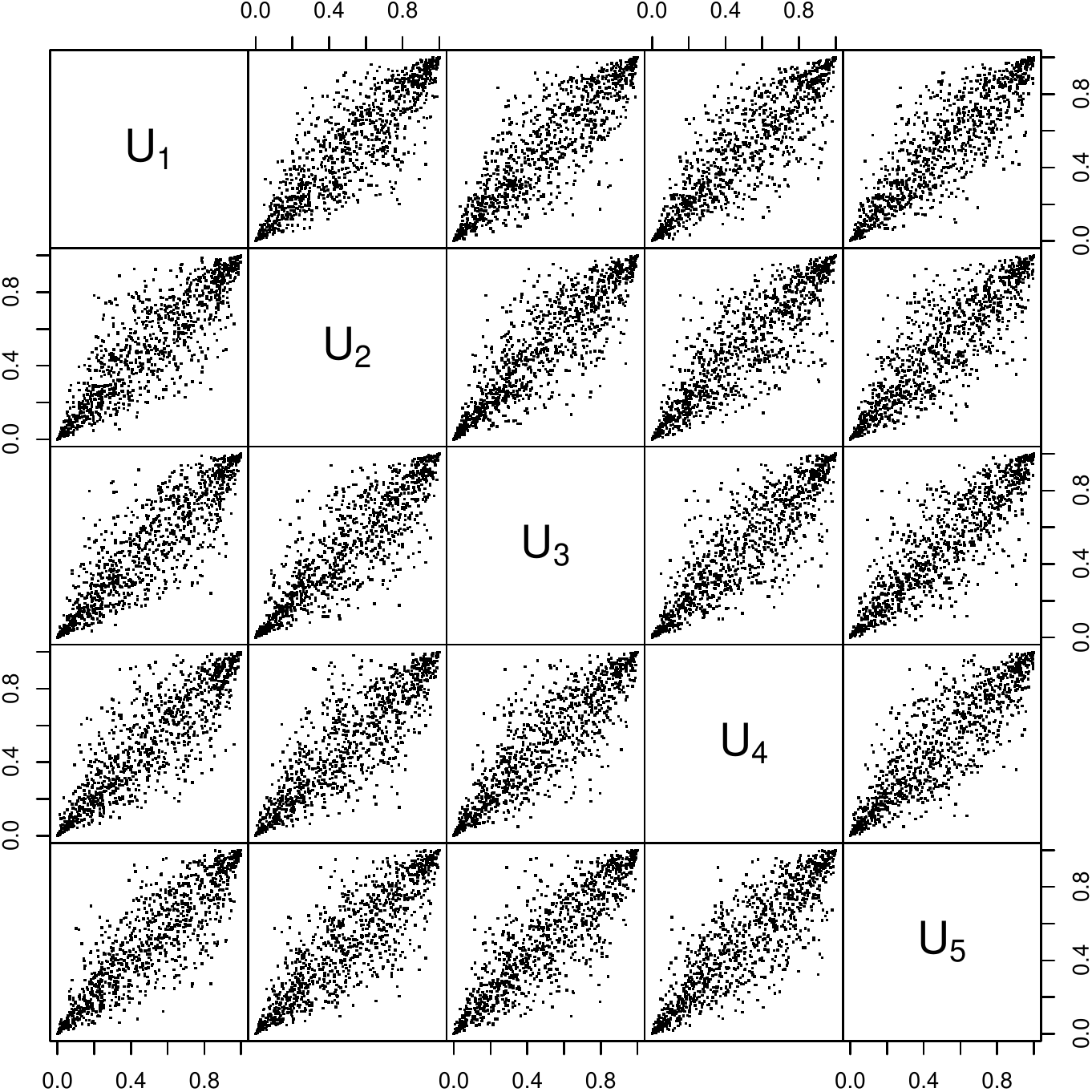}%
    \par\medskip
    \includegraphics[width=0.37\textwidth]{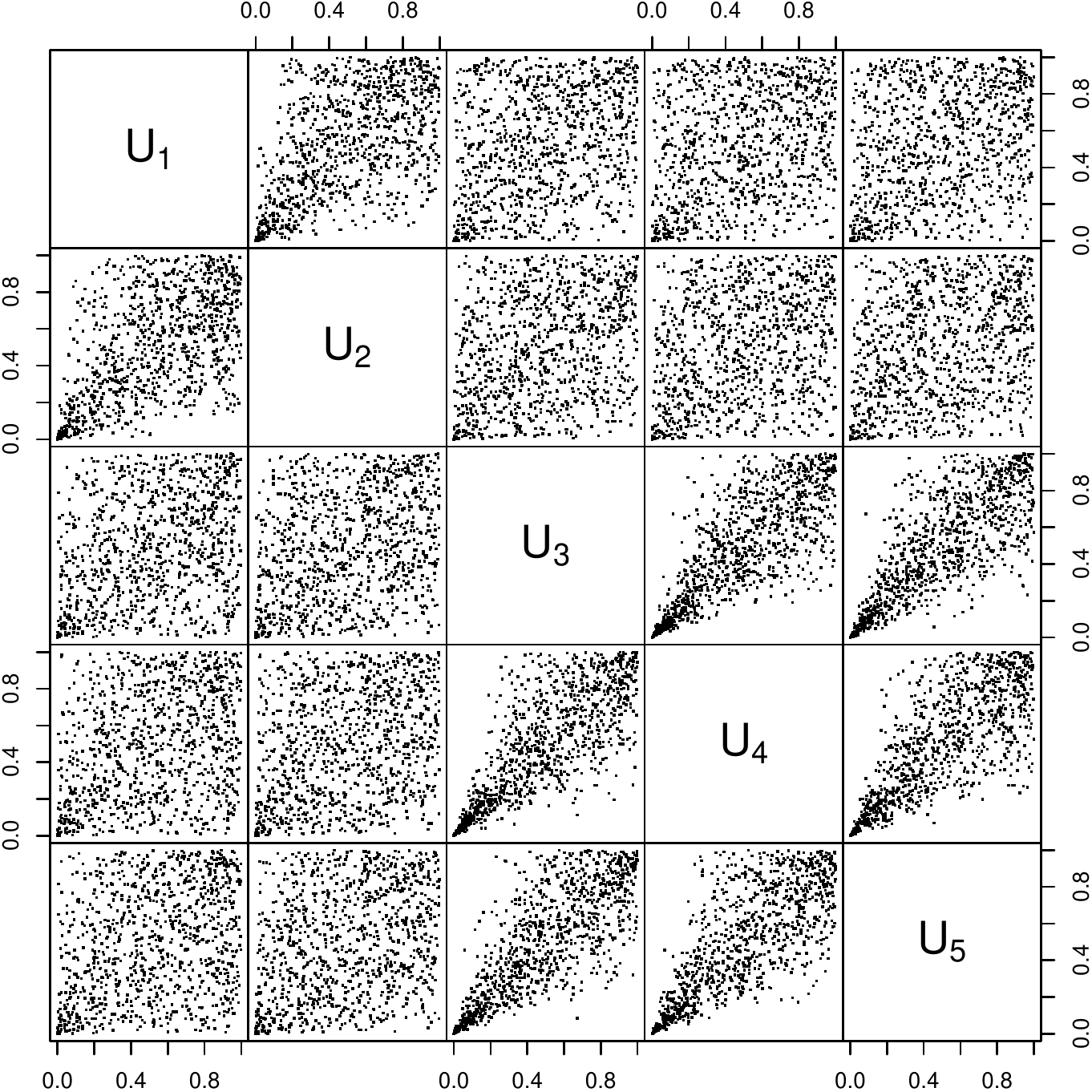}%
    \hspace{6mm}
    \includegraphics[width=0.37\textwidth]{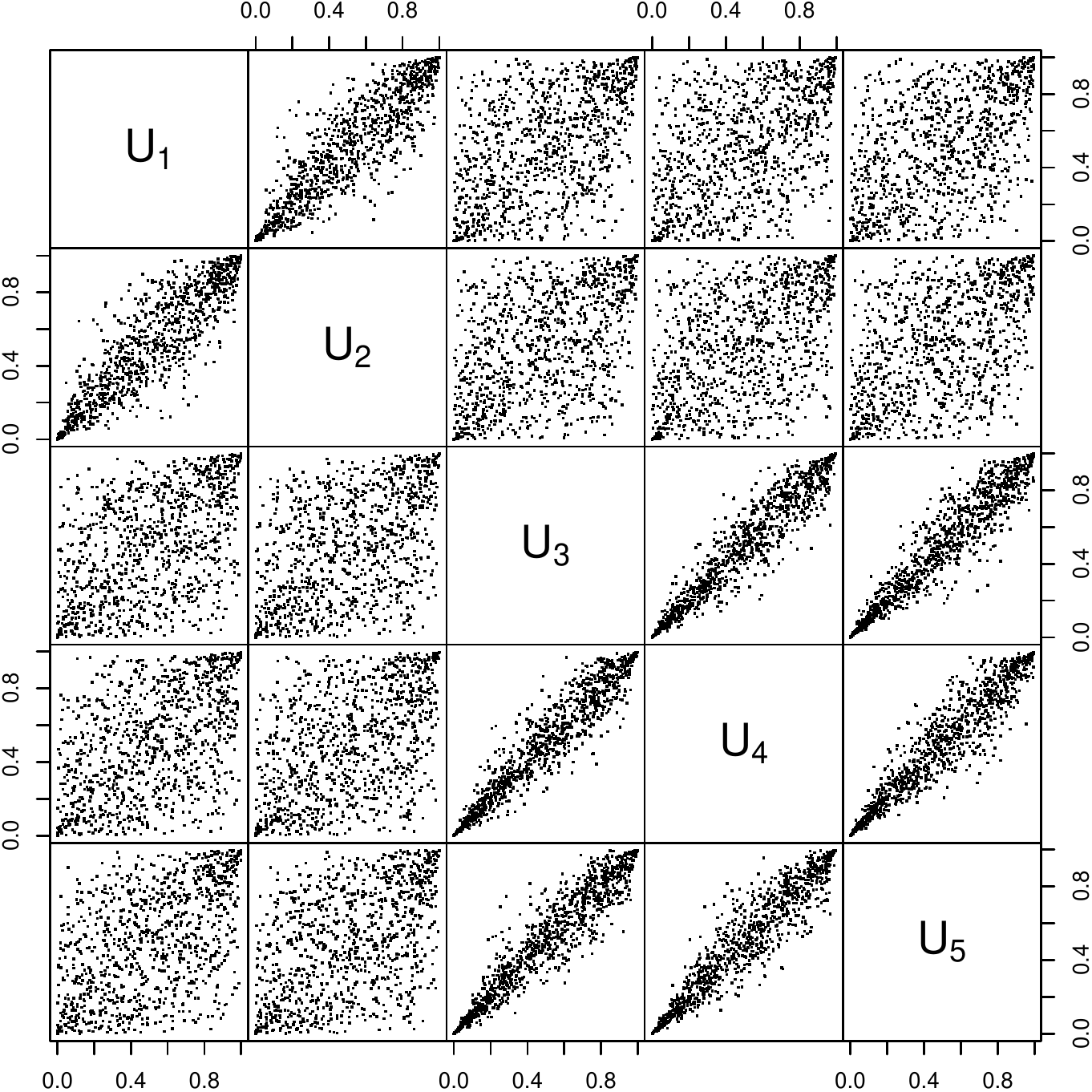}%
    \par\medskip
    \includegraphics[width=0.37\textwidth]{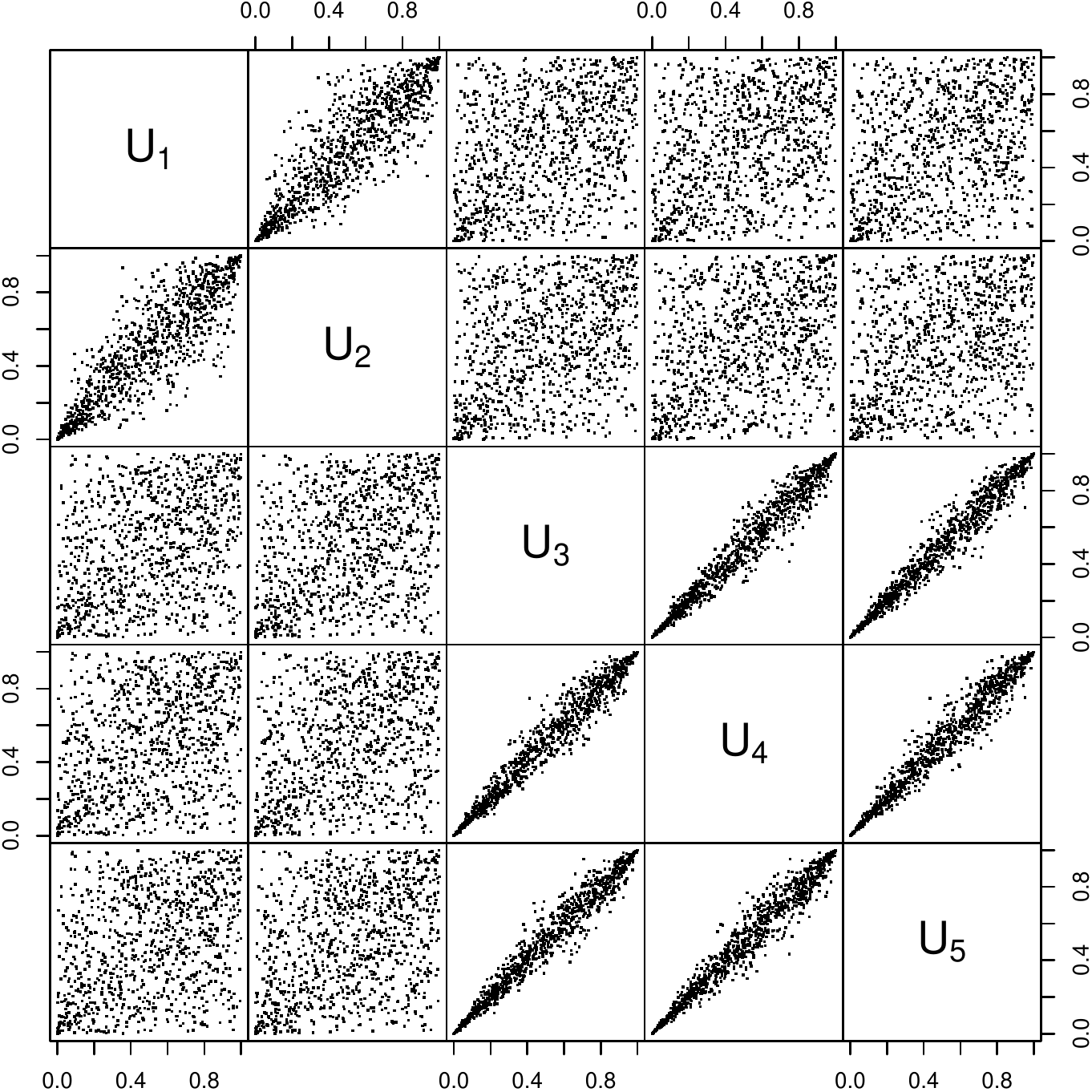}%
    \hspace{6mm}
    \includegraphics[width=0.37\textwidth]{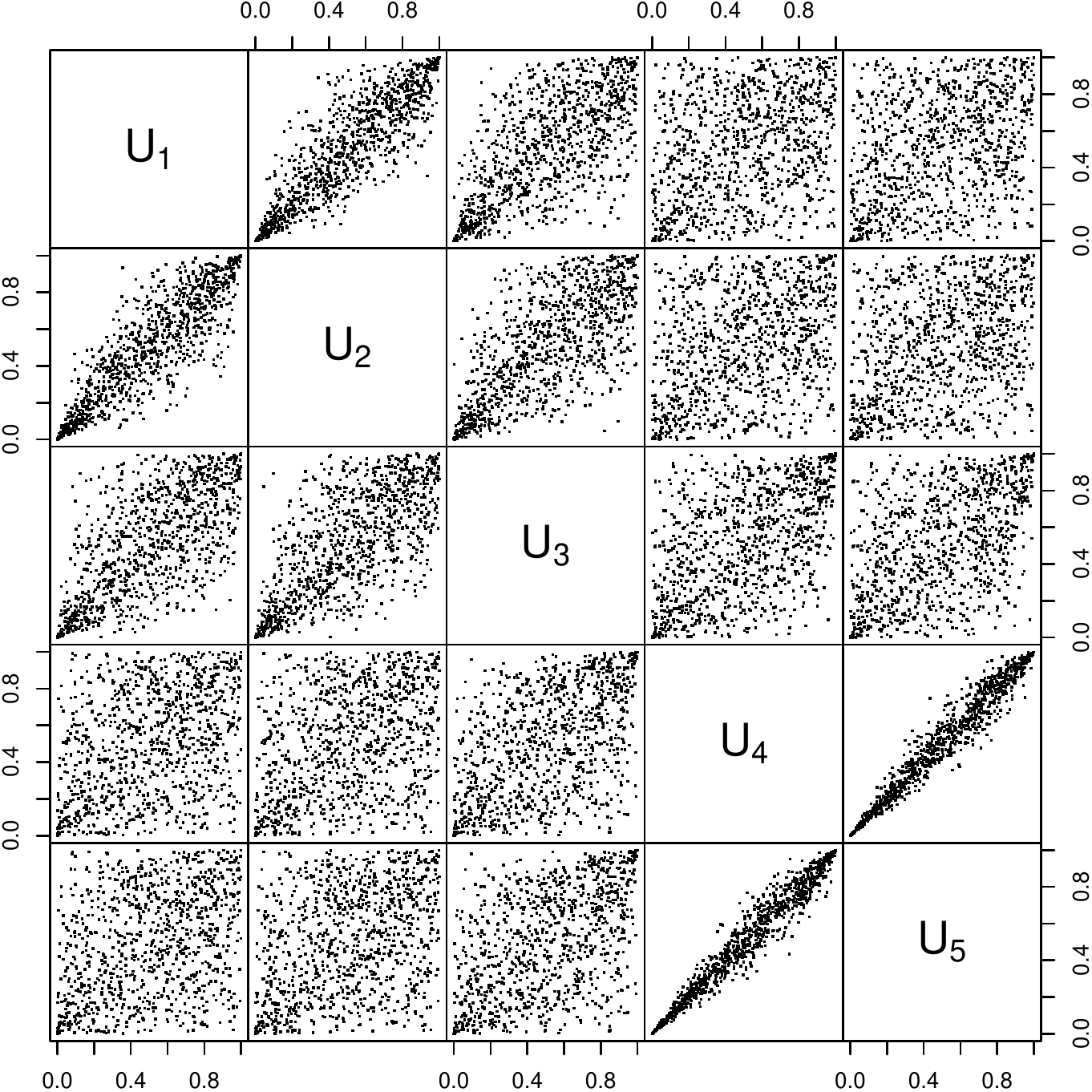}%
    \caption{Scatter-plot matrices of five-dimensional copula samples of size 1000 of a
      Clayton copula (top left), an AXC with Clayton frailties and Gumbel EVC
      (top right), a nested Clayton copula (middle left), a HAXC with hierarchical
      Clayton frailties and Gumbel EVC (middle right), a HAXC with hierarchical
      Clayton frailties and nested Gumbel EVC of the same hierarchical structure
      (bottom left) and a HAXC with hierarchical Clayton frailties and nested Gumbel
      EVC of different hierarchical structure (bottom right).}
    \label{fig:scatter:plots}
  \end{figure}
\end{example}

\begin{example}[EVCs vs HEVCs vs (different) HAXCs] \label{ex:evcvshevc}
  Similar to Figure~\ref{fig:scatter:plots}, Figure~\ref{fig:scatter:plots:2}
  shows scatter-plot matrices of five-dimensional copula samples of size 1000
  from the following models for $\bm{U}=(U_1,\dots,U_5)\sim C$; for simulating from the extremal t EVC,
  we use the \R\ package \texttt{mev} of \cite{mev}.
  \begin{enumerate}
  \item Top left: Extremal $t$ EVC with $\nu=3.5$ degrees of freedom
    and homogeneous correlation matrix $P$ with off-diagonal entries 0.7.
  \item Top right: Extremal $t$ HEVC with two sectors of sizes 2
    and 3, respectively, such that the correlation matrix $P$ has entries 0.2 for pairs
    belonging to different sectors, 0.5 for pairs belonging to the first sector
    and 0.7 for pairs belonging to the second sector.
  \item Middle left: HAXC with single Clayton frailty (as in
    Example~\ref{ex:splom:1} Part~\ref{ex:splom:1:1}) and
    extremal $t$ HEVC recycled from the top right plot.
  \item Middle right: HAXC with hierarchical Clayton frailties (as in Example~\ref{ex:splom:1} Part~\ref{ex:splom:1:3})
    and extremal $t$ EVC recycled from the top left plot.
  \item Bottom left: HAXC with hierarchical Clayton frailties (as in
    Example~\ref{ex:splom:1} Part~\ref{ex:splom:1:3}) and extremal $t$ HEVC
    recycled from the top right plot. Note that there are two types of hierarchies
    involved, at the level of the (hierarchical) frailties and at the level of the
    (hierarchical) extremal $t$ EVC. Furthermore, the two hierarchical structures match.
  \item Bottom right: HAXC as in the bottom left plot, but the hierarchical
    structures of the frailties (sector sizes 3 and 2, respectively) and of the
    HEVC (sector sizes 2 and 3, respectively) differ in this case.
  \end{enumerate}
  Note that we can sample from a hierarchical Schlather model (special case of
  extremal $t$ for $\nu=1$), a hierarchical Brown--Resnick model, and their
  corresponding HAXCs in a similar fashion.
  \begin{figure}[htbp]
    \centering
    \includegraphics[width=0.37\textwidth]{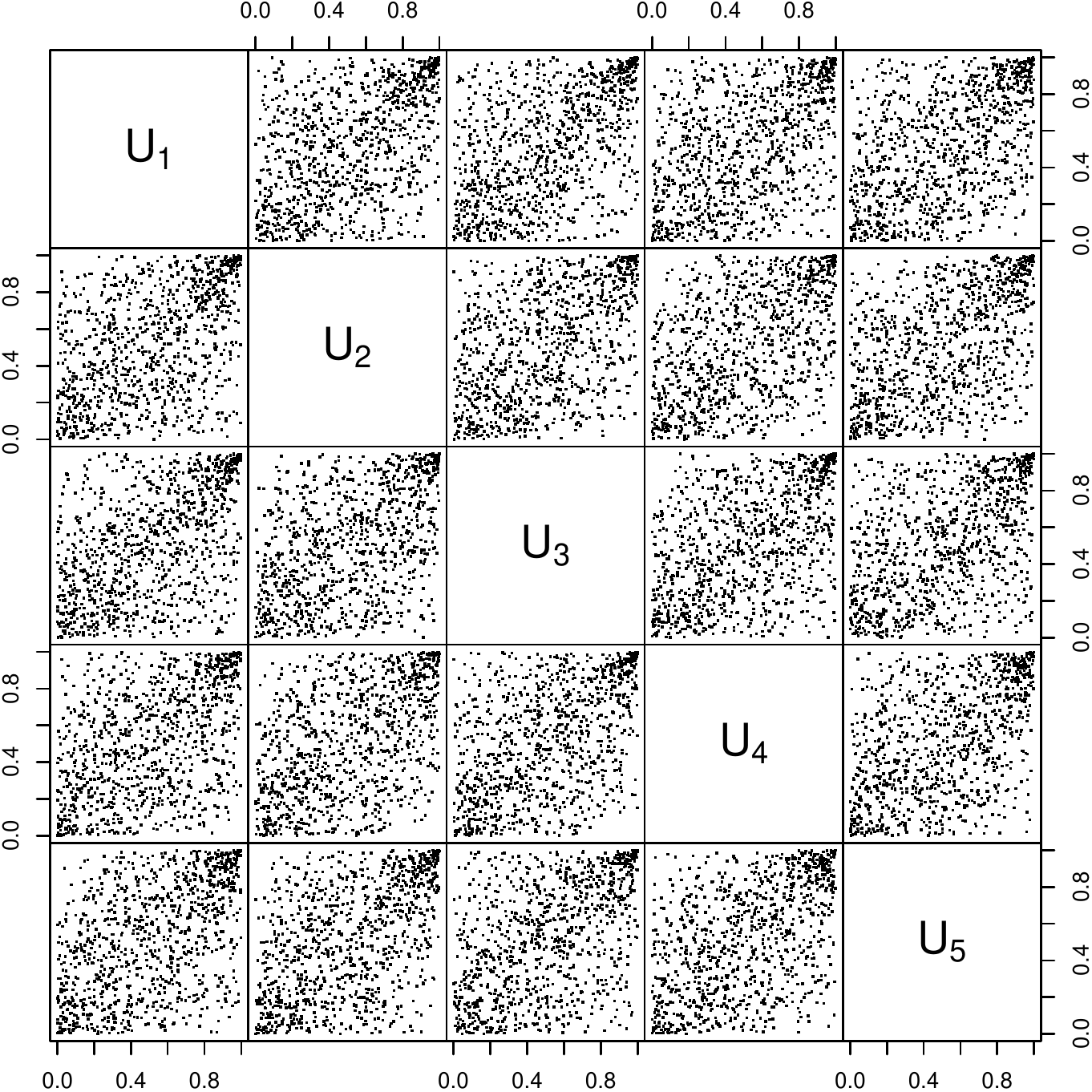}%
    \hspace{6mm}
    \includegraphics[width=0.37\textwidth]{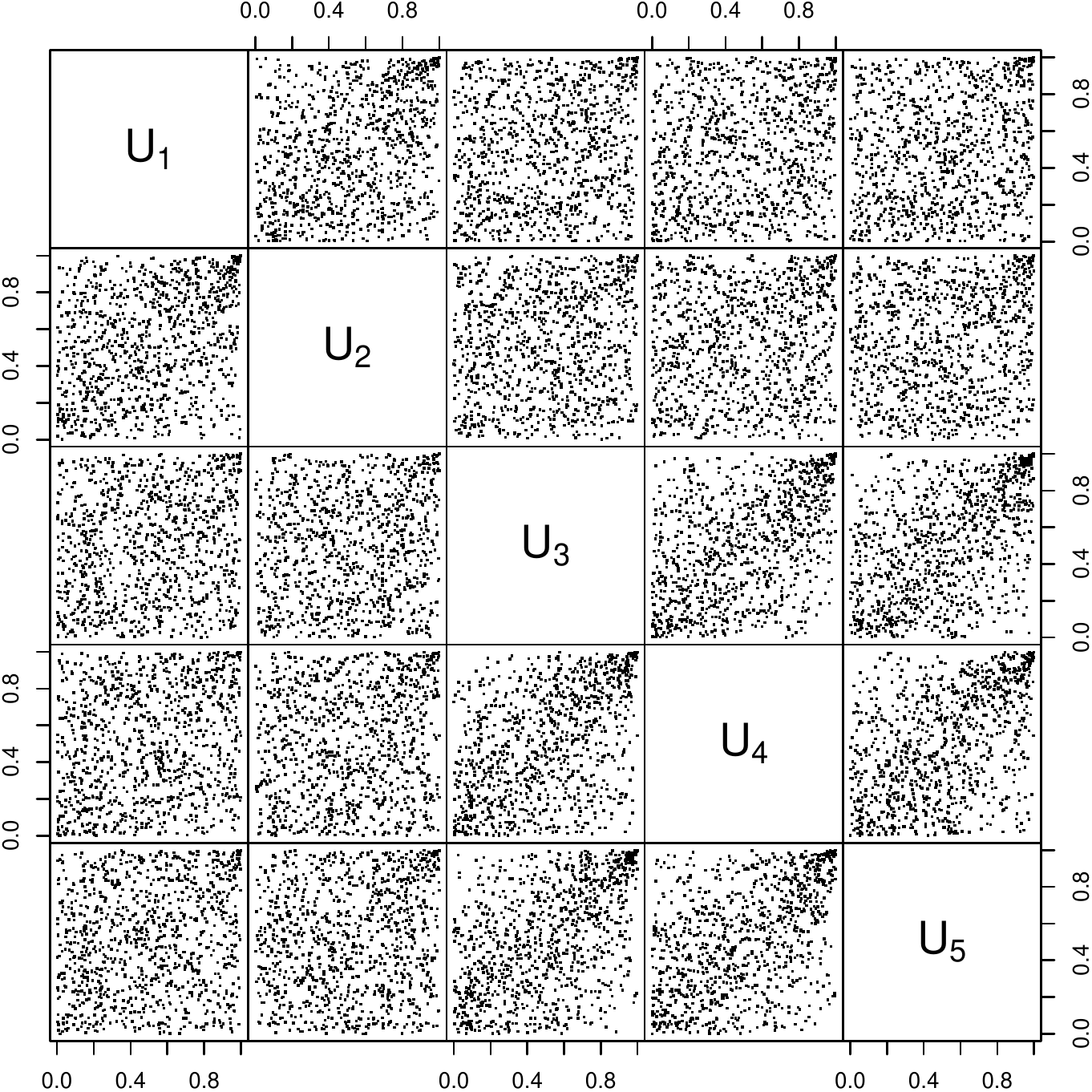}%
    \par\medskip
    \includegraphics[width=0.37\textwidth]{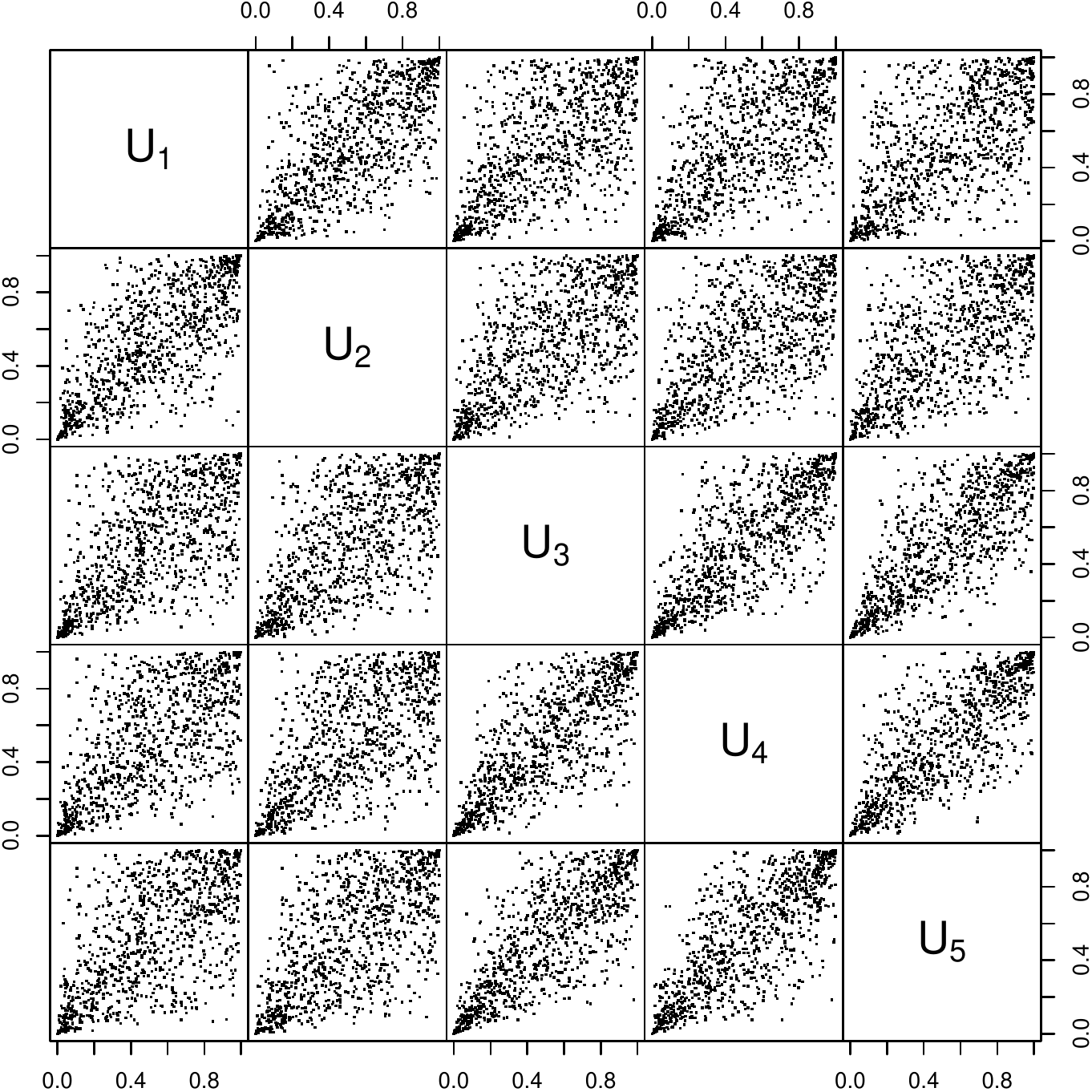}%
    \hspace{6mm}
    \includegraphics[width=0.37\textwidth]{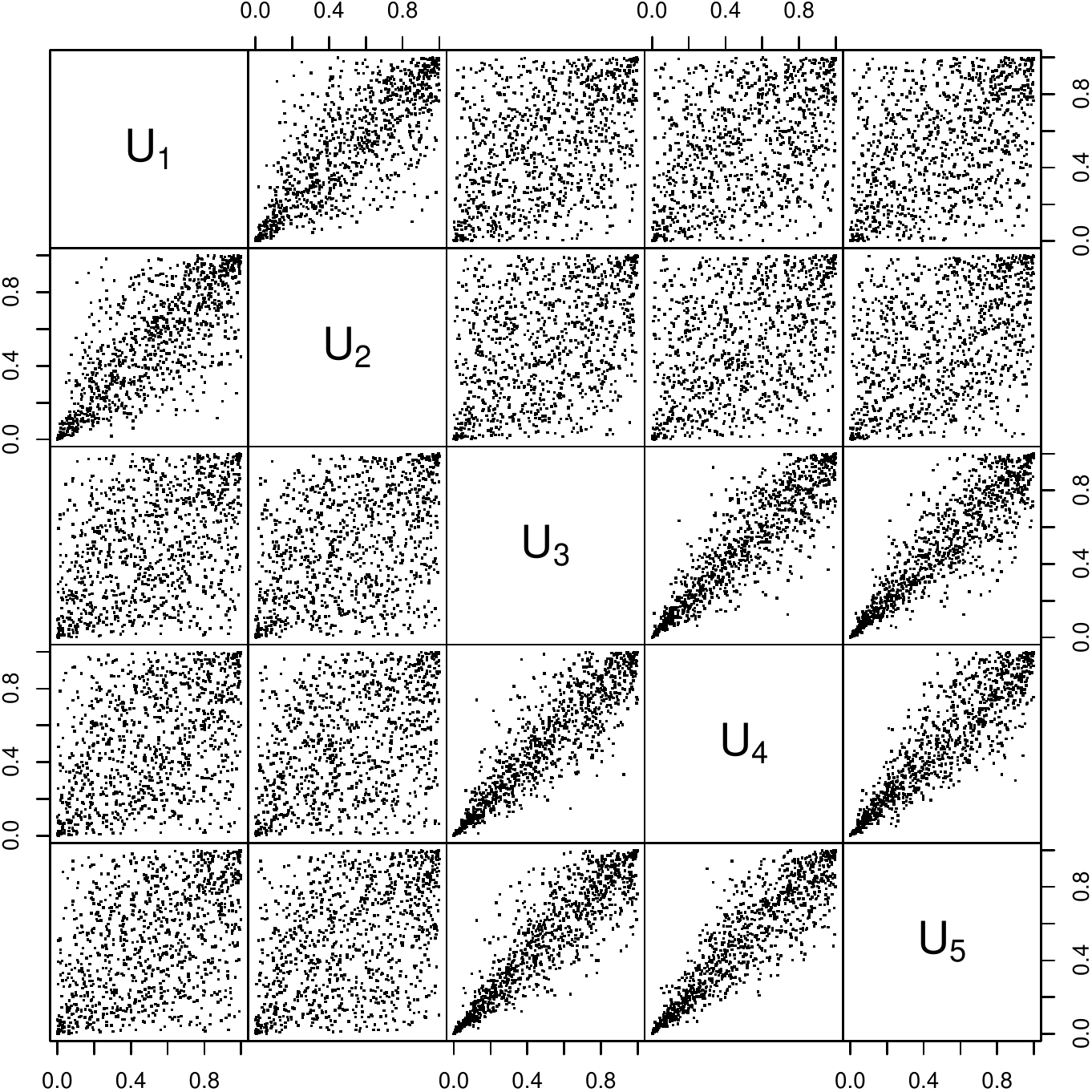}%
    \par\medskip
    \includegraphics[width=0.37\textwidth]{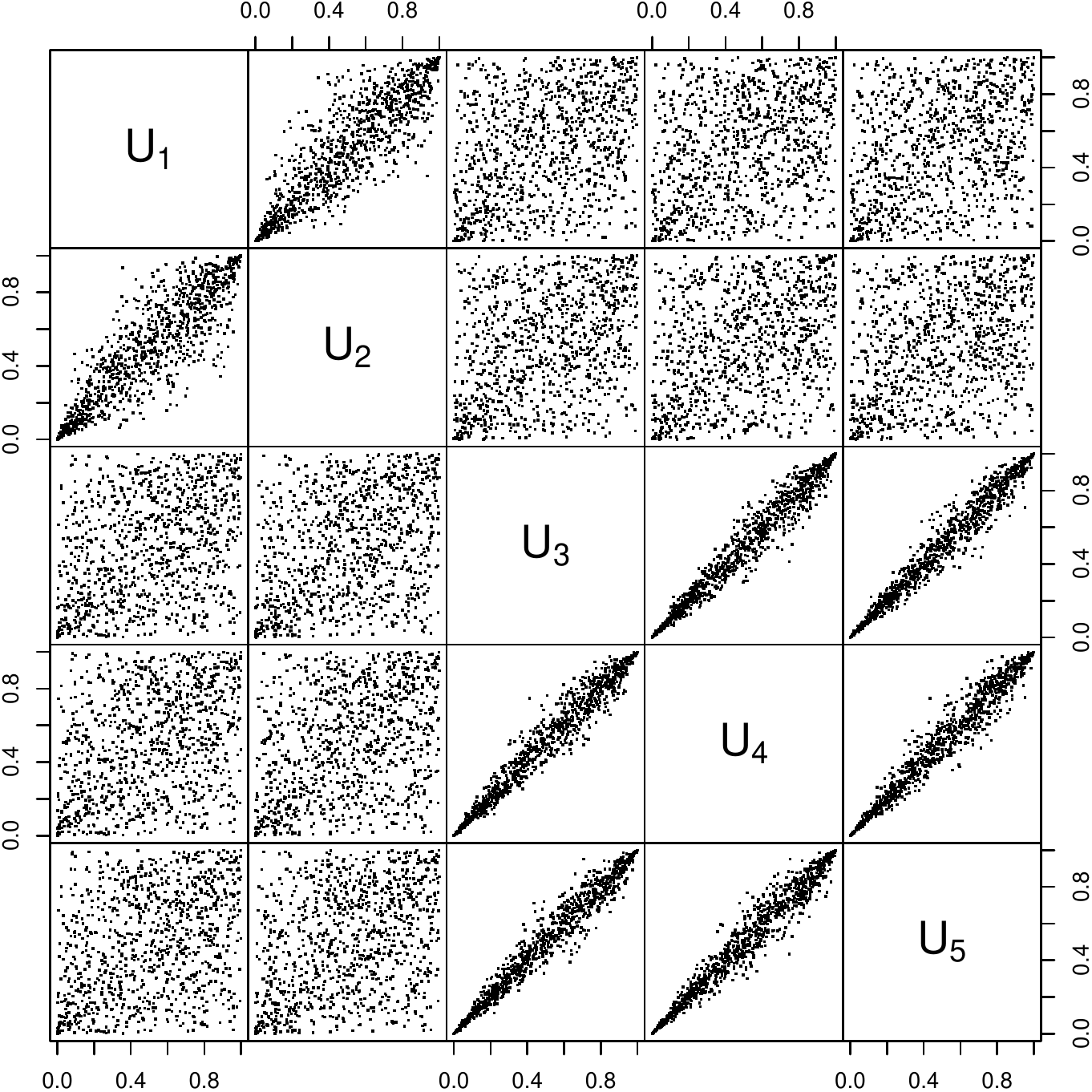}%
    \hspace{6mm}
    \includegraphics[width=0.37\textwidth]{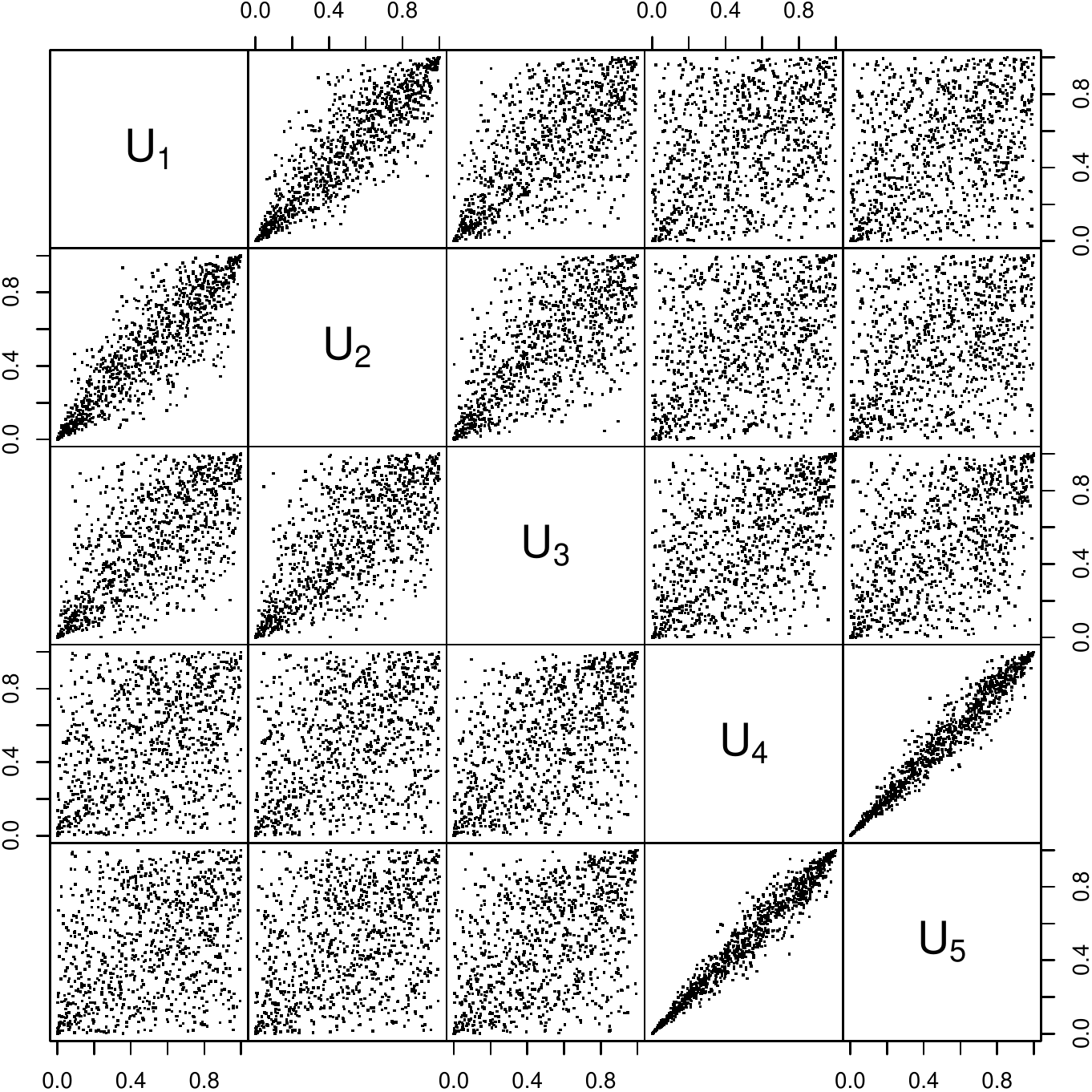}%
    \caption{Scatter-plot matrices of five-dimensional copula samples of size
      1000 of an extremal $t$ EVC (top left), a hierarchical extremal $t$
      copula (a HEVC; top right), a HAXC with single Clayton frailty and
      extremal $t$ HEVC (middle left), a HAXC with hierarchical Clayton frailties
      and extremal $t$ EVC (middle right), a HAXC with hierarchical Clayton frailties
      and extremal $t$ HEVC of the same hierarchical structure (bottom left) and a HAXC
      with hierarchical Clayton frailties and extremal $t$ HEVC of different hierarchical
      structure (bottom right).}
    \label{fig:scatter:plots:2}
  \end{figure}
\end{example}

\section{Conclusion}
We extended the class of AXCs to HAXCs. Hierarchies can take place in two forms,
either separately or simultaneously. First, the EVC involved in the construction
of AXCs can have a hierarchical structure. To this end we presented a new
approach for constructing hierarchical stable tail dependence functions based on
a connection between stable tail dependence functions and $d$-norms. Second, a
hierarchical structure can be imposed at the level of frailties similarly as
NACs arise from ACs. Even more flexible constructions can be obtained by
choosing a different hierarchical structure for the HEVC and the
hierarchical frailties in the construction. Since all presented constructions
are based on stochastic representations, sampling is immediate; see also the
presented examples and vignette.

As a contribution to the literature on AXCs, we also derived a general formula
for the density of AXCs and the computation of the corresponding logarithmic
density. Furthermore, we briefly addressed the question when nested AXCs (NAXCs)
can be constructed (either through nested stable tail dependence functions alone
or, additionally, through hierarchical frailties). This is, in principle,
possible, but there is currently only one family of examples known when all the
assumptions involved are fulfilled. Further research is thus required to find
out whether this is the only possible case for which NAXCs result.

\subsection*{Acknowledgments}
The first author acknowledges support from NSERC (Grant RGPIN-5010-2015) and
FIM, ETH Z\"urich. The third author acknowledges support from NSERC (PGS D
scholarship). We would also like to thank the AE and reviewers for their
comments which helped to improve the paper substantially.

\appendix

\section{Density of Archimax copulas}\label{app:AXC:dens}
For likelihood-based inference based on AXCs, it is important to
know their density. In this section, we present the general form of the density
of AXCs (if it exists) and address how it can be computed numerically.
\begin{proposition}[AXC density]\label{prop:AXC:dens}
  If the respective partial derivatives of $\ell$ exist and are continuous, the
  density $c$ of a $d$-dimensional AXC $C$ is given by
  \begin{align*}
    c(\bm{u})=\biggl(\,\prod_{j=1}^d (\psii)'(u_j)\biggr)\sum_{k=1}^d\psi^{(k)}\bigl(\ell(\psii(\bm{u}))\bigr)\!\!\!\!\!\!\sum_{\pi\in\Pi:|\pi|=k}\prod_{B\in\pi}(\D_B \ell)(\psii(\bm{u})),\quad\bm{u}\in(0,1)^d,
  \end{align*}
  where $\psii(\bm{u})=(\psii(u_1),\dots,\psii(u_d))$, $\Pi$ denotes the set of
  all partitions $\pi$ of $\{1,\dots,d\}$ (with $|\pi|$ denoting the number of
  elements of $\pi$) and $(\D_B\ell)(\psi^{-1}(\bm u))$ denotes the partial
  derivatives of $\ell$ with respect to the variables with index in $B$,
  evaluated at $\psii(\bm{u})$.
\end{proposition}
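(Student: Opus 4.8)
The plan is to recognize the product-and-sum-over-partitions on the right-hand side as the output of the multivariate chain rule (a Fa\`a di Bruno formula for the composition $\psi\circ\ell$ with a multivariable inner function), and to establish that formula by induction on $d$. First I would record the regularity that is available: since $\psi$ is a completely monotone Archimedean generator with $\psi(0)=1$, it is smooth and strictly decreasing on $(0,\infty)$, so $\psii$ is smooth on $(0,1)$ with $(\psii)'(u)=1/\psi'(\psii(u))$; combined with the hypothesis that the relevant partial derivatives of $\ell$ exist and are continuous on $(0,\infty)^d$, this makes the mixed partials $\D_B\ell$ symmetric in their indices (Clairaut) and shows that $C(\bm{u})=\psi(\ell(\psii(u_1),\dots,\psii(u_d)))$ is $d$ times continuously differentiable on $(0,1)^d$, so that its density is $c(\bm{u})=\partial^d C/(\partial u_1\cdots\partial u_d)$. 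Because each inner factor $u_j\mapsto\psii(u_j)$ depends on $u_j$ alone, iterating the chain rule in $u_1,\dots,u_d$ peels off one factor $(\psii)'(u_j)$ per variable and replaces $\partial_{u_j}$ by $\partial_{t_j}$, giving
\begin{align*}
  c(\bm{u})=\Bigl(\prod_{j=1}^d(\psii)'(u_j)\Bigr)\biggl(\frac{\partial^d}{\partial t_1\cdots\partial t_d}\,\psi(\ell(\bm{t}))\biggr)\bigg|_{\bm{t}=\psii(\bm{u})}.
\end{align*}

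It then remains to prove the identity
\begin{align*}
  \frac{\partial^d}{\partial t_1\cdots\partial t_d}\,\psi(\ell(\bm{t}))=\sum_{\pi\in\Pi}\psi^{(|\pi|)}(\ell(\bm{t}))\prod_{B\in\pi}(\D_B\ell)(\bm{t}),
\end{align*}
by induction on $d$. The base case $d=1$ is the ordinary chain rule. For the inductive step, differentiate the $d$-variable right-hand side with respect to a new variable $t_{d+1}$: by the product rule, each summand $\psi^{(|\pi|)}(\ell)\prod_{B\in\pi}\D_B\ell$ produces one term $\psi^{(|\pi|+1)}(\ell)\,(\partial_{t_{d+1}}\ell)\prod_{B\in\pi}\D_B\ell$, corresponding to adjoining the singleton block $\{d+1\}$ to $\pi$, together with, for each block $B\in\pi$, a term $\psi^{(|\pi|)}(\ell)\,(\D_{B\cup\{d+1\}}\ell)\prod_{B'\in\pi,\,B'\neq B}\D_{B'}\ell$, corresponding to inserting $d+1$ into $B$. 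Since every partition of $\{1,\dots,d+1\}$ arises in exactly one of these ways from a unique partition of $\{1,\dots,d\}$ (namely the one obtained by deleting $d+1$), summing over $\pi$ reproduces the identity at level $d+1$. Grouping the partitions according to their number of blocks $k=|\pi|\in\{1,\dots,d\}$ then converts the double sum into $\sum_{k=1}^d\psi^{(k)}(\ell(\psii(\bm{u})))\sum_{\pi\in\Pi:|\pi|=k}\prod_{B\in\pi}(\D_B\ell)(\psii(\bm{u}))$, which, multiplied by the factor $\prod_{j=1}^d(\psii)'(u_j)$, is exactly the stated expression for $c$.

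The main obstacle is not any single computation but the bookkeeping in the inductive step — matching the two ways the product rule can act (on the outer factor $\psi^{(|\pi|)}(\ell)$ versus on one of the inner factors $\D_B\ell$) bijectively with the two ways an element $d+1$ can be adjoined to a partition of $\{1,\dots,d\}$ to form a partition of $\{1,\dots,d+1\}$ (as a new singleton block or inside an existing block) — and, upstream of that, checking that the assumed continuity of the partials of $\ell$ together with the smoothness of $\psi$ genuinely licenses both the interchange of the mixed partials and the identification of $\partial^d C/(\partial u_1\cdots\partial u_d)$ with the density of $C$ on $(0,1)^d$.
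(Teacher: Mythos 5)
Your proposal is correct and takes essentially the same route as the paper: the heart of the argument is the multivariate Fa\`a di Bruno formula, summing $\psi^{(|\pi|)}(\ell)\prod_{B\in\pi}\D_B\ell$ over set partitions $\pi$ of $\{1,\dots,d\}$, with the factor $\prod_{j=1}^d(\psii)'(u_j)$ extracted because each inner map $u_j\mapsto\psii(u_j)$ depends on a single variable. The only differences are minor: you prove the partition identity by induction (the standard singleton-versus-insertion bijection), whereas the paper simply cites it, and you peel off the $(\psii)'(u_j)$ factors before applying the formula rather than after, which is a harmless reordering; your added remarks on smoothness of $\psi$, symmetry of the mixed partials, and the identification of $c$ with $\partial^d C/\partial u_1\cdots\partial u_d$ are correct and slightly more careful than the paper's implicit treatment.
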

\begin{proof}
  By a multivariate version of Fa\`a di Bruno's Formula, see \cite{hardy2006}, the $d$th derivative of a composition of two
  functions $f:\IR\to\IR$ and $g:\IR^d\to\IR$ is given by
  \begin{align*}
    \D f(g(\bm x)) &= \sum_{\pi\in\Pi}\biggl(f^{(|\pi|)}(g(\bm x))\prod_{B\in\pi}\D_B g(\bm x)\biggr)=\sum_{k=1}^d\sum_{\pi\in\Pi:|\pi|=k}\biggl(f^{(|\pi|)}(g(\bm x))\prod_{B\in\pi}\D_B g(\bm x)\biggr)\notag\\
    &=\sum_{k=1}^d\sum_{\pi\in\Pi:|\pi|=k}\biggl(f^{(k)}(g(\bm x))\prod_{B\in\pi}\D_B g(\bm x)\biggr)=\sum_{k=1}^df^{(k)}(g(\bm x))\!\!\!\!\!\!\sum_{\pi\in\Pi:|\pi|=k}\prod_{B\in\pi}\D_B g(\bm x),
  \end{align*}
  where $\D = \frac{\partial^{d}}{\partial x_d\dots\partial x_1}$,
  $\D_B = \frac{\partial^{|B|}}{\prod_{j\in B}\partial x_j}$, and $B\in\pi$
  means that $B$ runs through all partition elements of $\pi$. Assuming that the
  appearing derivatives exist and are continuous, we obtain from taking $f(x)=\psi(x)$ and
  $g(\bm{x})=\ell(\psii(\bm{x}))$ that
  \begin{align*}
    c(\bm{u})&=\sum_{k=1}^d\psi^{(k)}\bigl(\ell(\psii(\bm{u}))\bigr)\!\!\!\!\!\!\sum_{\pi\in\Pi:|\pi|=k}\prod_{B\in\pi}\frac{\partial^{|B|}}{\prod_{j\in B}\partial u_j} \ell(\psii(\bm{u}))\\
    &=\biggl(\,\prod_{j=1}^d (\psii)'(u_j)\biggr)\sum_{k=1}^d\psi^{(k)}\bigl(\ell(\psii(\bm{u}))\bigr)\!\!\!\!\!\!\sum_{\pi\in\Pi:|\pi|=k}\prod_{B\in\pi}(\D_B \ell)(\psii(\bm{u})),\quad\bm{u}\in(0,1)^d,
  \end{align*}
  where the last equality holds since the derivatives of all of
  $\psii(u_1),\dots,\psii(u_d)$ (from applying the chain rule) appear in each
  summand of the sum $\sum_{\pi\in\Pi:|\pi|=k}$ and can thus be taken out of
  both summations.
\end{proof}

As a quick check of Proposition~\ref{prop:AXC:dens}, we can recover the density of ACs and EVCs.
\begin{corollary}[AC density as special case]
  For $\ell(\bm{x})=\sum_{j=1}^dx_j$, the density of ACs correctly
  follows from Proposition~\ref{prop:AXC:dens} by noting that
  \begin{align*}
    \sum_{\pi\in\Pi:|\pi|=k}\prod_{B\in\pi}(\D_B
    \ell)(\bm{x})=\sum_{\pi\in\Pi:|\pi|=k}\prod_{B\in\pi}\I_{\{|B|=1\}}=\sum_{\pi\in\Pi:|\pi|=k}\I_{\{|B|=1\,\text{for
    all}\,B\in\pi\}}=\I_{\{k=d\}}.
  \end{align*}
\end{corollary}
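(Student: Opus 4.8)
The plan is to substitute $\ell(\bm x)=\sum_{j=1}^d x_j$ directly into the formula of Proposition~\ref{prop:AXC:dens} and to simplify the combinatorial sum over partitions, thereby verifying the chain of equalities stated in the corollary. First I would note that this $\ell$ is linear, hence $C^\infty$ with continuous partial derivatives, so the hypotheses of Proposition~\ref{prop:AXC:dens} are met. Moreover $\partial\ell/\partial x_j\equiv 1$ for every $j$, while every mixed or higher-order partial derivative of $\ell$ vanishes identically; consequently, for a block $B\subseteq\{1,\dots,d\}$ one has $\D_B\ell(\bm x)=1$ if $|B|=1$ and $\D_B\ell(\bm x)=0$ if $|B|\ge 2$, that is, $\D_B\ell(\bm x)=\I_{\{|B|=1\}}$, irrespective of $\bm x$.

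Next I would push this through the inner product and the sum over partitions. Since a product of $\{0,1\}$-valued indicators equals the indicator of the conjunction of the underlying events, $\prod_{B\in\pi}\D_B\ell(\bm x)=\prod_{B\in\pi}\I_{\{|B|=1\}}=\I_{\{|B|=1\,\text{for all}\,B\in\pi\}}$. The event ``$|B|=1$ for all $B\in\pi$'' isolates exactly one partition of $\{1,\dots,d\}$, namely the finest one, which decomposes $\{1,\dots,d\}$ into $d$ singletons and therefore has $|\pi|=d$ blocks. Hence $\sum_{\pi\in\Pi:|\pi|=k}\prod_{B\in\pi}\D_B\ell(\bm x)=\I_{\{k=d\}}$, which is precisely the asserted identity.

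Finally I would substitute this back into Proposition~\ref{prop:AXC:dens}: the outer sum over $k$ collapses to its $k=d$ term, yielding
\begin{align*}
  c(\bm u)=\biggl(\,\prod_{j=1}^d(\psii)'(u_j)\biggr)\psi^{(d)}\bigl(\psii(u_1)+\dots+\psii(u_d)\bigr),\quad\bm u\in(0,1)^d,
\end{align*}
which is the well-known density of a $d$-dimensional Archimedean copula with completely monotone generator $\psi$ (see, e.g., \cite{mcneilneslehova2009}); note that $\psi$ is automatically $d$ times differentiable since it is completely monotone, so $\psi^{(d)}$ is well defined and the signs work out to a nonnegative density. There is no genuine obstacle in this argument: its whole content is the elementary observation that the linearity of $\ell$ annihilates every partition term except the finest one. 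The only point deserving an explicit line is the bookkeeping step identifying ``all blocks are singletons'' with ``$\pi$ is the finest partition,'' and hence $|\pi|=d$.
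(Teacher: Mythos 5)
Your argument is correct and matches the paper's own reasoning, which consists precisely of the observation $\D_B\ell=\I_{\{|B|=1\}}$, the collapse of the partition sum to the finest partition (hence $\I_{\{k=d\}}$), and the resulting reduction of Proposition~\ref{prop:AXC:dens} to the familiar Archimedean density $\bigl(\prod_{j=1}^d(\psii)'(u_j)\bigr)\psi^{(d)}\bigl(\sum_{j=1}^d\psii(u_j)\bigr)$. Nothing further is needed.
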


\begin{corollary}[EVC density as special case]
  For $\psi(t)=\exp(-t)$, $t\geq0$, the density of EVCs correctly
  follows from Proposition~\ref{prop:AXC:dens} as one has
  \begin{align*}
    c(\bm{u})&=\biggl(\,\prod_{j=1}^d \biggl(-\frac{1}{u_j}\biggr)\biggr)\sum_{k=1}^d\exp\bigl(-\ell(-\log(\bm{u}))\bigr)\!\!\!\!\!\!\sum_{\pi\in\Pi:|\pi|=k}\prod_{B\in\pi}(-(\D_B \ell)(-\log(\bm{u}))),\quad\bm{u}\in(0,1)^d;
  \end{align*}
  see, for example, \cite{doyon2013} or \cite{castrucciohusergenton2016}.
\end{corollary}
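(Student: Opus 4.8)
The plan is to specialize the formula of Proposition~\ref{prop:AXC:dens} to the generator $\psi(t)=\exp(-t)$ and keep track of signs. First I would record the elementary facts about this generator: its functional inverse is $\psii(t)=-\log t$, so that $\psii(u_j)=-\log u_j$ and $(\psii)'(u_j)=-1/u_j$, and its higher derivatives are $\psi^{(k)}(t)=(-1)^k\exp(-t)$ for every $k\in\{1,\dots,d\}$; in particular $\psi^{(k)}\bigl(\ell(\psii(\bm u))\bigr)=(-1)^k\exp\bigl(-\ell(-\log\bm u)\bigr)$. Substituting these into the AXC density yields
\begin{align*}
  c(\bm u)=\biggl(\,\prod_{j=1}^d\Bigl(-\frac{1}{u_j}\Bigr)\biggr)\sum_{k=1}^d(-1)^k\exp\bigl(-\ell(-\log\bm u)\bigr)\!\!\!\!\!\!\sum_{\pi\in\Pi:|\pi|=k}\prod_{B\in\pi}(\D_B\ell)(-\log\bm u).
\end{align*}

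Second, I would move the factor $(-1)^k$ inside the inner product over partition blocks. The only point that warrants a word of justification is that every partition $\pi$ occurring in the sum $\sum_{\pi\in\Pi:|\pi|=k}$ has exactly $k$ blocks, so $(-1)^k=\prod_{B\in\pi}(-1)$ and hence
\[
  (-1)^k\prod_{B\in\pi}(\D_B\ell)(-\log\bm u)=\prod_{B\in\pi}\bigl(-(\D_B\ell)(-\log\bm u)\bigr).
\]
Inserting this identity into the previous display gives precisely the asserted formula, with the exponential term left (harmlessly) inside the sum over $k$ since it depends on neither $k$ nor $\pi$.

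As a consistency check one can observe that the resulting expression is exactly the density of the EVC $C(\bm u)=\exp(-\ell(-\log\bm u))$ obtained by differentiating $C$ directly (again via the multivariate Fa\`a di Bruno formula, cf.\ \cite{hardy2006}), matching the formulas in \cite{doyon2013} and \cite{castrucciohusergenton2016}. I do not expect any genuine obstacle here: the argument is a one-line substitution together with the bookkeeping identity $(-1)^{|\pi|}=\prod_{B\in\pi}(-1)$, and the only place requiring care is ensuring the sign is correctly absorbed into each factor $\D_B\ell$ rather than appearing globally.
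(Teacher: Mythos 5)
Your proposal is correct and is exactly the argument the paper intends (the corollary is stated without a written proof as a direct specialization of Proposition~\ref{prop:AXC:dens}): substituting $\psii(u_j)=-\log u_j$, $(\psii)'(u_j)=-1/u_j$ and $\psi^{(k)}(t)=(-1)^k e^{-t}$, then absorbing $(-1)^k=\prod_{B\in\pi}(-1)$ into the blocks of each partition with $|\pi|=k$, yields the stated formula. No gaps.
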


The following result provides the general form of the density of AXCs based on
the stable tail dependence function $\ell$ of a Gumbel copula.
\begin{corollary}[Density of AXCs with Gumbel stable tail dependence function as special case]
  For the stable tail dependence function
  $\ell(\bm x) = (x_1^{1/\alpha} + \dots + x_d^{1/\alpha})^{\alpha}$,
  $\bm{x}\in[0,\infty)^d$, of a Gumbel copula with parameter
  $\alpha\in(0,1]$, %
  the density $c$ of an AXC is given by
  \begin{align*}
    c(\bm{u})&=\frac{1}{\alpha^d}\biggl(\,\prod_{j=1}^d (\psii)'(u_j)
    \psii(u_j)^{\frac{1}{\alpha}-1}\biggr)\\
    &\phantom{{}={}}\cdot\sum_{k=1}^d\psi^{(k)}\biggl(\biggl(\,\sum_{j=1}^d\psii(u_j)^{\frac{1}{\alpha}}\biggr)^{\alpha}\biggr) \biggl(\,\sum_{j=1}^d
    \psii(u_j)^{\frac{1}{\alpha}} \biggr)^{\alpha k-d}\!\!\!\!\!\!\!\!\!\!\!\!\!\sum_{\pi\in\Pi:|\pi|=k}\prod_{B\in\pi}(\alpha)_{|B|},\quad \bm{u}\in(0,1)^d,
  \end{align*}
  where $(\alpha)_{|B|}=\prod_{l=0}^{|B|-1}(\alpha - l)$ denotes the falling factorial.
\end{corollary}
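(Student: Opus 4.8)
The plan is to specialize Proposition~\ref{prop:AXC:dens} by explicitly computing the mixed partial derivatives $\D_B\ell$ of the Gumbel stable tail dependence function and then doing the exponent bookkeeping over partitions. Write $\ell(\bm x)=s^\alpha$ with $s=s(\bm x)=\sum_{j=1}^d x_j^{1/\alpha}$. The crucial (and essentially only nontrivial) observation is that $\ell$ depends on $\bm x$ only through the additively separable quantity $s$, so that $\partial^2 s/\partial x_i\partial x_j=0$ for $i\neq j$; consequently, for any block $B\subseteq\{1,\dots,d\}$ of distinct indices, all ``cross'' terms produced by the chain rule (equivalently, by Fa\`a di Bruno's formula) vanish, and a one-line induction on $|B|$ gives
\begin{align*}
  (\D_B\ell)(\bm x)=\Bigl(\tfrac{\rd^{|B|}}{\rd s^{|B|}}s^\alpha\Bigr)\prod_{j\in B}\frac{\partial s}{\partial x_j}=(\alpha)_{|B|}\,s^{\alpha-|B|}\,\alpha^{-|B|}\prod_{j\in B}x_j^{\frac1\alpha-1},
\end{align*}
where $\partial s/\partial x_j=\alpha^{-1}x_j^{1/\alpha-1}$ and $\tfrac{\rd^{b}}{\rd s^{b}}s^\alpha=(\alpha)_b\,s^{\alpha-b}$ with the falling factorial $(\alpha)_b=\prod_{l=0}^{b-1}(\alpha-l)$.

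Next I would substitute $\bm x=\psii(\bm u)$ (so that $s=\sum_{j=1}^d\psii(u_j)^{1/\alpha}$ and $x_j^{1/\alpha-1}=\psii(u_j)^{1/\alpha-1}$) and multiply the displayed expression over a partition $\pi$ of $\{1,\dots,d\}$ with $|\pi|=k$. Since any such partition satisfies $\sum_{B\in\pi}|B|=d$, the bookkeeping is immediate: the factors $\alpha^{-|B|}$ collect to $\alpha^{-d}$; the factors $s^{\alpha-|B|}$ collect to $s^{\sum_{B\in\pi}(\alpha-|B|)}=s^{\alpha k-d}$; the products $\prod_{j\in B}\psii(u_j)^{1/\alpha-1}$ collect, across all blocks, to $\prod_{j=1}^d\psii(u_j)^{1/\alpha-1}$; and the remaining combinatorial factor is $\prod_{B\in\pi}(\alpha)_{|B|}$. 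Hence $\prod_{B\in\pi}(\D_B\ell)(\psii(\bm u))=\alpha^{-d}\,s^{\alpha k-d}\bigl(\prod_{j=1}^d\psii(u_j)^{1/\alpha-1}\bigr)\prod_{B\in\pi}(\alpha)_{|B|}$.

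Finally I would insert this into Proposition~\ref{prop:AXC:dens}: the factor $\prod_{j=1}^d(\psii)'(u_j)$ is already present there, the new $\alpha^{-d}$ and $\prod_{j=1}^d\psii(u_j)^{1/\alpha-1}$ do not depend on $k$ or $\pi$ and pull out of both sums, the argument $\ell(\psii(\bm u))=s^\alpha=(\sum_{j=1}^d\psii(u_j)^{1/\alpha})^\alpha$ is exactly the argument of $\psi^{(k)}$, and the $k$-sum that remains is $\sum_{k=1}^d\psi^{(k)}(s^\alpha)\,s^{\alpha k-d}\sum_{\pi\in\Pi:|\pi|=k}\prod_{B\in\pi}(\alpha)_{|B|}$. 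Reassembling the pieces yields precisely the stated formula. I expect the derivative computation in the first paragraph --- recognizing that the cross terms drop out because $\ell$ is a function of the single sum $s$ --- to be the one place requiring any care; the rest is exponent accounting. Two quick sanity checks I would record along the way: the continuity hypothesis of Proposition~\ref{prop:AXC:dens} holds automatically here since $\bm x\mapsto s^\alpha$ is smooth on $(0,\infty)^d$ (and $\psii(u_j)\in(0,\infty)$ for $\bm u\in(0,1)^d$); and for $\alpha=1$ every $(\alpha)_{|B|}$ with $|B|\ge2$ vanishes, leaving only the singleton partition ($k=d$), which recovers the Archimedean special case consistently with the AC-density corollary.
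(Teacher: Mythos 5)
Your proposal is correct and follows essentially the same route as the paper: compute $\D_B\ell(\bm x)=(\alpha)_{|B|}\bigl(\sum_{j=1}^d x_j^{1/\alpha}\bigr)^{\alpha-|B|}\alpha^{-|B|}\prod_{j\in B}x_j^{1/\alpha-1}$, use $\sum_{B\in\pi}|B|=d$ to collect the partition product into $\alpha^{-d}\,s^{\alpha k-d}\prod_{j=1}^d x_j^{1/\alpha-1}\prod_{B\in\pi}(\alpha)_{|B|}$, and substitute $\bm x=\psii(\bm u)$ into Proposition~\ref{prop:AXC:dens}. Your added justification that the cross terms vanish because $\ell$ depends on $\bm x$ only through the separable sum $s$ is a nice explicit remark the paper leaves implicit, but the argument is otherwise the same.
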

\begin{proof}
  For the stable tail dependence function $\ell(\bm x) = (x_1^{1/\alpha} + \dots + x_d^{1/\alpha})^{\alpha}$,
  $\bm{x}\in[0,\infty)^d$, $\alpha\in(0,1]$, one has
  \begin{align*}
    \D_B\ell(\bm{x})=(\alpha)_{|B|}\biggl(\,\sum_{j=1}^dx_j^{1/\alpha}\biggr)^{\alpha-|B|}\biggl(\frac{1}{\alpha}\biggr)^{|B|}\prod_{j\in
    B}x_j^{1/\alpha-1}.
  \end{align*}
  Since every index in $\{1,\dots,d\}$ appears in precisely one
  $B\in\pi$,
  \begin{align*}
    \sum_{\pi\in\Pi:|\pi|=k}\prod_{B\in\pi}\D_B
    \ell(\bm{x})&=\frac{1}{\alpha^d}\prod_{j=1}^dx_j^{1/\alpha-1}\!\!\!\!\!\sum_{\pi\in\Pi:|\pi|=k}\prod_{B\in\pi}(\alpha)_{|B|}\biggl(\,\sum_{j=1}^dx_j^{1/\alpha}\biggr)^{\alpha-|B|}\\
    &=\frac{1}{\alpha^d}\prod_{j=1}^dx_j^{1/\alpha-1}\biggl(\,\sum_{j=1}^dx_j^{1/\alpha}\biggr)^{\alpha
      k-d}\!\!\!\!\!\sum_{\pi\in\Pi:|\pi|=k}\prod_{B\in\pi}(\alpha)_{|B|}.
  \end{align*}
  Using the general form of the density as given in Proposition~\ref{prop:AXC:dens}
  and $\bm{x}=\psii(\bm{u})$ leads to the result as stated.
\end{proof}

As we can see from Proposition~\ref{prop:AXC:dens}, the general form of the
density of AXCs involves the (possibly high-order) derivatives
$\psi^{(k)}$ and $\D_B \ell$. The former are well know to be numerically
non-trivial; see, for example, \cite{hofertmaechlermcneil2012} or
\cite{hofertmaechlermcneil2013}. We therefore now address how the density of
AXCs can be computed numerically. This is typically done by
computing a proper logarithm (and then returning the exponential, but only if
required), that is, a logarithm that is numerically more robust than just
$\log c$. As we will see, two nested proper logarithms can be used to evaluate
the logarithmic density of AXCs, which is especially appealing.
\begin{proposition}[AXC logarithmic density evaluation]\label{prop:AXC:log:dens:eval}
  If the respective partial derivatives of $\ell$ exist and are continuous, the
  logarithmic density $\log c$ of a $d$-dimensional AXC $C$ is given by
  \begin{align*}
    \log c(\bm{u})=\sum_{j=1}^d \log((-\psii)'(u_j))+b_{\text{max}}^{\psi,\ell}(\bm{u})+\log\sum_{k=1}^d\exp(b_k^{\psi,\ell}(\bm{u})-b_{\text{max}}^{\psi,\ell}(\bm{u})),\quad\bm{u}\in(0,1)^d,
  \end{align*}
  where the notation is as in Proposition~\ref{prop:AXC:dens} and
  \begin{align*}
    b_k^{\psi,\ell}(\bm{u})&=\log((-1)^k\psi^{(k)})\bigl(\ell(\psii(\bm{u}))\bigr)+a_{\text{max}}^{\psi,\ell,k}(\bm{u})+\log\!\!\!\!\sum_{\pi\in\Pi:|\pi|=k}\!\!\!\!\exp(a_{\pi}^{\psi,\ell,k}(\bm{u})-a_{\text{max}}^{\psi,\ell,k}(\bm{u})),\\
    b_{\text{max}}^{\psi,\ell}(\bm{u})&=\max_{k} b_{k}^{\psi,\ell}(\bm{u})
  \end{align*}
  for
  \begin{align*}
    a_{\pi}^{\psi,\ell,k}(\bm{u})=\sum_{B\in\pi}\log((-1)^{|B|-1}\D_B \ell)(\psii(\bm{u})),\quad a_{\text{max}}^{\psi,\ell,k}(\bm{u})=\max_{\pi\in\Pi:|\pi|=k} a_{\pi}^{\psi,\ell,k}(\bm{u}).
  \end{align*}
\end{proposition}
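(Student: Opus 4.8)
The strategy is to start from the closed form of $c$ in Proposition~\ref{prop:AXC:dens} and rewrite every factor so that the quantities appearing under a logarithm are non-negative, after which the two nested ``$\log$-$\sum$-$\exp$'' expressions in the statement are obtained by applying one elementary identity twice. The first step is to record the signs of the three types of building blocks. Since $\psi$ is an Archimedean generator, $\psii$ is strictly decreasing, so $(\psii)'(u_j)<0$ and $\prod_{j=1}^d(\psii)'(u_j)=(-1)^d\prod_{j=1}^d\bigl((-\psii)'(u_j)\bigr)$ with all factors $(-\psii)'(u_j)>0$. Since $\psi$ is completely monotone, $(-1)^k\psi^{(k)}\ge 0$ on $(0,\infty)$, so $\psi^{(k)}=(-1)^k\bigl((-1)^k\psi^{(k)}\bigr)$. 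Finally, a stable tail dependence function is fully $d$-max decreasing, which is exactly the property that $(-1)^{|B|-1}\D_B\ell\ge 0$ for every non-empty $B\subseteq\{1,\dots,d\}$ (see \cite{ressel2013} and \cite{charpentierfougeresgenestneslehova2014}), so $\D_B\ell=(-1)^{|B|-1}\bigl((-1)^{|B|-1}\D_B\ell\bigr)$.

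Next I would combine the signs inside a fixed summand of Proposition~\ref{prop:AXC:dens}. For a partition $\pi$ with $|\pi|=k$ the blocks are disjoint and exhaust $\{1,\dots,d\}$, hence $\sum_{B\in\pi}(|B|-1)=d-k$ and $\prod_{B\in\pi}\D_B\ell=(-1)^{d-k}\prod_{B\in\pi}\bigl((-1)^{|B|-1}\D_B\ell\bigr)$. Collecting the sign $(-1)^d$ from $\prod_j(\psii)'(u_j)$, the sign $(-1)^k$ from $\psi^{(k)}$ and the sign $(-1)^{d-k}$ just obtained, the net sign of the $(k,\pi)$ term is $(-1)^d(-1)^k(-1)^{d-k}=(-1)^{2d}=1$, independently of $k$ and $\pi$. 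Therefore the density of Proposition~\ref{prop:AXC:dens} may be rewritten, on $(0,1)^d$, as
\begin{align*}
  c(\bm{u})=\biggl(\,\prod_{j=1}^d\bigl((-\psii)'(u_j)\bigr)\biggr)\sum_{k=1}^d\bigl((-1)^k\psi^{(k)}\bigr)\bigl(\ell(\psii(\bm{u}))\bigr)\!\!\!\!\sum_{\pi\in\Pi:|\pi|=k}\ \prod_{B\in\pi}\bigl((-1)^{|B|-1}\D_B\ell\bigr)(\psii(\bm{u})),
\end{align*}
a non-negative leading product times a sum over $k$ of a non-negative factor times a sum over $\pi$ of products of non-negative factors.

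It then remains to take logarithms, using the convention $\log 0=-\infty$ and $\exp(-\infty)=0$. With $a_\pi^{\psi,\ell,k}(\bm u)=\sum_{B\in\pi}\log\bigl((-1)^{|B|-1}\D_B\ell\bigr)(\psii(\bm u))$ the innermost sum is $\sum_{\pi:|\pi|=k}\exp\bigl(a_\pi^{\psi,\ell,k}(\bm u)\bigr)$, and the identity $\log\sum_i\exp(a_i)=a_{\max}+\log\sum_i\exp(a_i-a_{\max})$, valid for any finite family of reals with $a_{\max}=\max_i a_i$ (the value $-\infty$ included), applied with $a_{\max}^{\psi,\ell,k}(\bm u)=\max_{\pi:|\pi|=k}a_\pi^{\psi,\ell,k}(\bm u)$, shows that the logarithm of the $k$th summand of the outer sum equals precisely $b_k^{\psi,\ell}(\bm u)$ as defined in the statement. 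Applying the same identity once more to $\sum_{k=1}^d\exp\bigl(b_k^{\psi,\ell}(\bm u)\bigr)$ with $b_{\max}^{\psi,\ell}(\bm u)=\max_k b_k^{\psi,\ell}(\bm u)$, and adding the contribution $\sum_{j=1}^d\log\bigl((-\psii)'(u_j)\bigr)$ of the leading product, produces the asserted expression for $\log c(\bm u)$.

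The calculation itself is routine bookkeeping; the one genuinely delicate point is the sign accounting, i.e.\ checking that a product over the $k$ blocks of a partition contributes exactly the sign $(-1)^{d-k}$, which is what makes the three sources of sign cancel to $+1$ and thereby guarantees that $c\ge 0$ and that the logarithms above are of non-negative arguments. A minor caveat to state is that the displayed formula for $\log c$ holds as an identity on $\{\bm u\in(0,1)^d:c(\bm u)>0\}$; where $c(\bm u)=0$ one has $b_{\max}^{\psi,\ell}(\bm u)=-\infty$ and, with the stated conventions, the right-hand side also equals $-\infty$.
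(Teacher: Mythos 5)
Your proposal is correct and follows essentially the same route as the paper's own proof: rewrite the density from Proposition~\ref{prop:AXC:dens} with the signs $(-1)^d$, $(-1)^k$ and $(-1)^{d-k}=\prod_{B\in\pi}(-1)^{|B|-1}$ absorbed into the factors, justify positivity via complete monotonicity of $\psi$ and the fully $d$-max decreasing property of $\ell$, and then apply the $\log$-$\sum$-$\exp$ (max-subtraction) identity twice. The only cosmetic difference is your explicit $\log 0=-\infty$ caveat, where the paper simply asserts strict positivity of the rewritten terms.
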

\begin{proof}
  Let $\bm{u}\in(0,1)^d$ and note that
  \begin{align*}
    c(\bm{u})&=\biggl(\,\prod_{j=1}^d (\psii)'(u_j)\biggr)\sum_{k=1}^d\psi^{(k)}\bigl(\ell(\psii(\bm{u}))\bigr)\!\!\!\!\!\!\sum_{\pi\in\Pi:|\pi|=k}\prod_{B\in\pi}(\D_B \ell)(\psii(\bm{u}))\\
             &=\biggl(\,\prod_{j=1}^d (-\psii)'(u_j)\biggr)\sum_{k=1}^d(-1)^k\psi^{(k)}\bigl(\ell(\psii(\bm{u}))\bigr)\!\!\!\!\!\!\sum_{\pi\in\Pi:|\pi|=k}\!\!\!\!(-1)^{d-k}\prod_{B\in\pi}(\D_B \ell)(\psii(\bm{u}))\\
    &=\biggl(\,\prod_{j=1}^d (-\psii)'(u_j)\biggr)\sum_{k=1}^d(-1)^k\psi^{(k)}\bigl(\ell(\psii(\bm{u}))\bigr)\!\!\!\!\!\!\sum_{\pi\in\Pi:|\pi|=k}\prod_{B\in\pi}((-1)^{|B|-1}\D_B \ell)(\psii(\bm{u})),
  \end{align*}
  where the last equality follows from the fact that $\sum_{B\in\pi}|B|=d$ and
  $\prod_{B\in\pi}\D_B \ell$ is taken over those $\pi$ for which $|\pi|=k$, so
  $\sum_{B\in\pi}1=k$; note that, as before, $|B|$ denotes the number of
  elements of $B$.

  Since $\psi$ has derivatives with alternating signs, $(-1)^k\psi^{(k)}>0$ for
  all arguments; in particular, $(-\psii)'>0$, too. By
  \cite[Theorem~6]{ressel2013}, $\ell$ is fully $d$-max decreasing which
  implies %
  that, for all arguments of $\ell$, $\sign(\D_B \ell)=(-1)^{|B|-1}$. This
  implies that $\sign((-1)^{|B|-1}\D_B \ell)=1$ and so all terms
  $a_{\pi}^{\psi,\ell,k}$ and $b_k^{\psi,\ell}$ as defined in the claim are
  well-defined.

  Taking the logarithm, the first product in $c$ becomes
  $\sum_{j=1}^d \log((-\psii)'(u_j))$ as in the claim. By using the definitions in the claim, the logarithm of the remaining sum
  can be written as
  \begin{align}
    \log\sum_{k=1}^d\exp\Bigl(\log\Bigl((-1)^k\psi^{(k)}\bigl(\ell(\psii(\bm{u}))\bigr)\!\!\!\!\!\!\sum_{\pi\in\Pi:|\pi|=k}\prod_{B\in\pi}((-1)^{|B|-1}\D_B \ell)(\psii(\bm{u}))\Bigr)\Bigr),\label{log:dens:main:term}
  \end{align}
  where
  {\allowdisplaybreaks
  \begin{align*}
    &\phantom{={}}\log\Bigl((-1)^k\psi^{(k)}\bigl(\ell(\psii(\bm{u}))\bigr)\!\!\!\!\!\!\sum_{\pi\in\Pi:|\pi|=k}\prod_{B\in\pi}((-1)^{|B|-1}\D_B \ell)(\psii(\bm{u}))\Bigr)\\
    &=\log\bigl((-1)^k\psi^{(k)}\bigl(\ell(\psii(\bm{u}))\bigr)\bigr) + \log\!\!\!\!\sum_{\pi\in\Pi:|\pi|=k}\prod_{B\in\pi}((-1)^{|B|-1}\D_B \ell)(\psii(\bm{u}))\\
    &=\log((-1)^k\psi^{(k)})\bigl(\ell(\psii(\bm{u}))\bigr) + \log\!\!\!\!\sum_{\pi\in\Pi:|\pi|=k}\exp\Bigl(\,\sum_{B\in\pi}\log((-1)^{|B|-1}\D_B \ell)(\psii(\bm{u}))\Bigr)\\
    &=\log((-1)^k\psi^{(k)})\bigl(\ell(\psii(\bm{u}))\bigr) + \log\!\!\!\!\sum_{\pi\in\Pi:|\pi|=k}\exp(a_{\pi}^{\psi,\ell,k}(\bm{u}))\\
    &=\log((-1)^k\psi^{(k)})\bigl(\ell(\psii(\bm{u}))\bigr) + a_{\text{max}}^{\psi,\ell,k}(\bm{u}) + \log\!\!\!\!\sum_{\pi\in\Pi:|\pi|=k}\exp(a_{\pi}^{\psi,\ell,k}(\bm{u})-a_{\text{max}}^{\psi,\ell,k}(\bm{u}))\\
    &=b_k^{\psi,\ell}(\bm{u}).
  \end{align*}}
  We thus obtain that the term in \eqref{log:dens:main:term} equals
  \begin{align*}
    \log\sum_{k=1}^d\exp (b_k^{\psi,\ell}(\bm{u}))=b_{\text{max}}^{\psi,\ell}(\bm{u})+\log\sum_{k=1}^d\exp (b_k^{\psi,\ell}(\bm{u})-b_{\text{max}}^{\psi,\ell}(\bm{u})).
  \end{align*}
  Putting the terms together, the logarithmic density has the form as in the claim.
\end{proof}

A couple of remarks are in order here. First, note that due to the signs of the
involved terms, one can apply an $\exp-\log$-trick twice (nested) for computing
the logarithmic density of AXCs. The remaining logarithms of sums in
the formula of the logarithmic density are typically numerically trivial, as all
summands are bounded to lie in $[0,1]$. More importantly, the nested
$\exp-\log$-trick allows one to compute both (possibly high-order) derivatives
$\psi^{(k)}$ and $\D_B \ell$ in logarithmic scale (see $b_k^{\psi,\ell}(\bm{u})$
and $a_{\pi}^{\psi,\ell,k}(\bm{u})$, respectively); the non-logarithmic values
are never used. This is numerically an important result as the logarithmic terms
can typically be implemented efficiently themselves; for
$\log((-1)^k\psi^{(k)})$ for well known Archimedean families see, for example,
\cite{hofertmaechlermcneil2012}, \cite{hofertmaechlermcneil2013} or the \R\
package \texttt{copula} of \cite{copula}.

\section{On nested Archimax copulas}\label{sec:NAXC}
We now briefly explore the question whether, in principle, HAXCs
can also be nested copulas so \emph{nested Archimax copulas (NAXCs)}, that is,
whether there are HAXCs $C$ with analytical form
$C(\bm{u})=C_0(C_1(\bm{u}_1),\dots,C_S(\bm{u}_S))$, $\bm{u}\in[0,1]^d$. Note
that the only known nontrivial class of copulas for which such \emph{nesting}
can be done (under the sufficient nesting condition) is the class of nested
Archimedean copulas. To this end, we make the following assumption.
\begin{assumption}[Nested EVCs]\label{ass:NEVCs}
  Assume that $D_0,\dots,D_S$ are EVCs such that
  $D(\bm{u})=D_0(D_1(\bm{u}_1),\dots,D_S(\bm{u}_S))$, $\bm{u}=(\bm{u}_1,\dots,\bm{u}_S)\in[0,1]^d$, is an
  EVC.
\end{assumption}
A $D$ as in Assumption~\ref{ass:NEVCs} is referred to as \emph{nested
  extreme-value copula (NEVC)}. The only known nontrivial copula family for
which Assumption~\ref{ass:NEVCs} is known to hold is the nested Gumbel family
(under the sufficient nesting condition). It thus remains an open question
whether there are other families of EVCs or a general construction of NEVCs
besides the Gumbel.

\subsection{Based on nested extreme-value copulas or nested stable tail
  dependence functions}
Our first result shows that Assumption~\ref{ass:NEVCs} is equivalent to the
existence of a \emph{nested stable tail dependence function}.
\begin{lemma}[Nesting correspondence]\label{lem:nesting:stable:tail:dep}
  An EVC $D$ is a NEVC if and only if the stable tail dependence function $\ell$
  of $D$ is \emph{nested}, that is,
  \begin{align}
    \ell(\bm{x})=\ell_0(\ell_1(\bm{x}_1),\dots,\ell_S(\bm{x}_S)),\quad\bm{x}\in[0,\infty)^d.\label{eq:ntdf}
  \end{align}
\end{lemma}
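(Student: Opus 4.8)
The plan is to pass everything through the bijection between extreme-value copulas and stable tail dependence functions encoded in \eqref{eq:gen:form:EVC}: an EVC $D$ and its stable tail dependence function $\ell$ determine each other via $D(\bm{u})=\exp(-\ell(-\log u_1,\dots,-\log u_d))$, equivalently $\ell(\bm{x})=-\log D(e^{-x_1},\dots,e^{-x_d})$ for $\bm{x}\in[0,\infty)^d$. Under the substitution $u_j=e^{-x_j}$ this correspondence is ``componentwise'': an EVC $D_s$ on a block of coordinates is matched with a stable tail dependence function $\ell_s$ on the same block. The equivalence will then follow by rewriting the copula nesting identity as the identity \eqref{eq:ntdf} and conversely.

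First I would prove the forward direction. Assume $D$ is a NEVC, so by Assumption~\ref{ass:NEVCs} there are EVCs $D_0,D_1,\dots,D_S$ with $D(\bm{u})=D_0(D_1(\bm{u}_1),\dots,D_S(\bm{u}_S))$. Let $\ell_s$ denote the stable tail dependence function of $D_s$; each exists by \eqref{eq:gen:form:EVC} (in the degenerate one-dimensional case simply $\ell_s(x)=x$). Writing $x_j=-\log u_j$ and using $D_s(\bm{u}_s)=\exp(-\ell_s(\bm{x}_s))$ together with $D_0(\bm{v})=\exp(-\ell_0(-\log\bm{v}))$ gives, for $\bm{u}\in(0,1)^d$,
\[
  \exp(-\ell(\bm{x}))=D(\bm{u})=\exp\bigl(-\ell_0(\ell_1(\bm{x}_1),\dots,\ell_S(\bm{x}_S))\bigr),
\]
hence $\ell(\bm{x})=\ell_0(\ell_1(\bm{x}_1),\dots,\ell_S(\bm{x}_S))$ on $(0,\infty)^d$, and then on all of $[0,\infty)^d$ by continuity of stable tail dependence functions; thus $\ell$ is nested.

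Next I would treat the converse. Assume $\ell$ is nested as in \eqref{eq:ntdf} with stable tail dependence functions $\ell_0,\ell_1,\dots,\ell_S$, and let $D_s$ be the EVC associated with $\ell_s$ via \eqref{eq:gen:form:EVC}. By the same computation, for $\bm{u}\in(0,1)^d$,
\[
  D_0(D_1(\bm{u}_1),\dots,D_S(\bm{u}_S))=\exp\bigl(-\ell_0(\ell_1(-\log\bm{u}_1),\dots,\ell_S(-\log\bm{u}_S))\bigr)=\exp(-\ell(-\log\bm{u}))=D(\bm{u}),
\]
and by continuity this identity holds on all of $[0,1]^d$. Since $D$ is assumed to be an EVC, this exhibits the decomposition required by Assumption~\ref{ass:NEVCs}, so $D$ is a NEVC.

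Both directions amount to a change of variables, so I do not expect a genuine obstacle. The only points needing care are that every EVC appearing in a decomposition genuinely carries a stable tail dependence function (immediate, with the trivial one-dimensional case noted above), that the boundary of the domain is handled by continuity rather than by direct substitution since $-\log u_j$ may equal $0$ or $+\infty$, and that one asserts only existence, not uniqueness, of the building blocks $D_s$ (equivalently $\ell_s$) --- which is exactly what the two implications supply.
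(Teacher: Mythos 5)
Your proof is correct and is essentially the paper's own argument: both pass the nesting identity through the correspondence \eqref{eq:gen:form:EVC} with the substitution $x_j=-\log u_j$ and cancel $-\log\circ\exp$, the paper writing it as one chain of equalities read as an ``if and only if''. Your extra care about the boundary (via continuity) and the one-dimensional blocks is a minor refinement of the same computation, not a different route.
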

\begin{proof}
  \begin{align*}
    D(\bm{u})&=D_0(D_1(\bm{u}_1),\dots,D_S(\bm{u}_S))=\exp(-\ell_0(-\log D_1(\bm{u}_1),\dots,-\log D_S(\bm{u}_S)))\\
    &=\exp\Bigl(-\ell_0\bigl(-\log\bigl(\exp(-\ell_1(-\log u_{11},\dots,-\log
      u_{1d_1}))\bigr),\dots,\\
    &\phantom{=\exp\Bigl(-\ell_0\bigl(}-\log\bigl(\exp(-\ell_S(-\log u_{S1},\dots,-\log u_{Sd_S}))\bigr)\bigr)\Bigr)\\
    &=\exp\bigl(-\ell_0(\ell_1(-\log u_{11},\dots,-\log u_{1d_1}),\dots,\ell_S(-\log u_{S1},\dots,-\log u_{Sd_S}))\bigr)\\
    &=\exp(-\ell(-\log u_{11},\dots,-\log u_{Sd_S})),\quad\bm{u}\in[0,1]^d,
  \end{align*}
  if and only if $\ell(\bm{x})=\ell_0(\ell_1(\bm{x}_1),\dots,\ell_S(\bm{x}_S)),\quad\bm{x}\in[0,\infty)^d$.
\end{proof}

The following proposition is essentially a nested version of one of the two HAXC
extensions suggested in Section~\ref{sec:two:ways:HAXCs} which, based on
Assumption~\ref{ass:NEVCs} leads to \emph{nested AXCs (NAXCs)} based on NEVCs
or, equivalently, nested stable tail dependence functions; see
Lemma~\ref{lem:nesting:stable:tail:dep}.
\begin{proposition}[NAXCs based on NEVCs or nested stable tail dependence functions]\label{thm:NAXCs:via:NEVCs}
  Let $D_s$, $s\in\{0,\dots,S\}$, be as in Assumption~\ref{ass:NEVCs} with
  respective stable tail dependence functions $\ell_s$, $s\in\{0,\dots,S\}$. Let $V\sim
  F=\LSi[\psi]$ and
  $\bm{Y}=(\bm{Y}_1,\dots,\bm{Y}_S)=(Y_{11},\dots,Y_{1d_1},\dots,$ $Y_{S1},\dots,Y_{Sd_S})\sim
  D$ be independent, where $D$ is an EVC as in Assumption~\ref{ass:NEVCs}. Then the copula $C$ of
  \begin{align*}
    \bm{U}&=\Bigl(\psi\Bigl(\frac{-\log\bm{Y}_1}{V}\Bigr),\dots,\psi\Bigl(\frac{-\log\bm{Y}_S}{V}\Bigr)\Bigr)\\
          &=\Bigl(\psi\Bigl(\frac{-\log Y_{11}}{V}\Bigr),\dots,\psi\Bigl(\frac{-\log Y_{1d_1}}{V}\Bigr),\dots,\psi\Bigl(\frac{-\log Y_{S1}}{V}\Bigr),\dots,\psi\Bigl(\frac{-\log Y_{Sd_S}}{V}\Bigr)\Bigr)
  \end{align*}
  is given, for all $\bm{u}\in[0,1]^d$, by
  \begin{align*}
    C(\bm{u})&=\psi\bigl(\ell_0\bigl(\ell_1(\psii(\bm{u}_1)),\dots,\ell_S(\psii(\bm{u}_S))\bigr)\bigr)\\
    &=\psi\bigl(\ell_0\bigl(\ell_1(\psii(u_{11}),\dots,\psii(u_{1d_1})),\dots,\ell_S(\psii(u_{S1}),\dots,\psii(u_{Sd_S}))\bigr)\bigr);
  \end{align*}
  that is, $C$ is an AXC with nested stable tail dependence function as given in \eqref{eq:ntdf}.
\end{proposition}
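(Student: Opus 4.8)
The plan is to obtain this as a direct consequence of the Archimax stochastic representation~\eqref{eq:stoch:rep:AXC} combined with Lemma~\ref{lem:nesting:stable:tail:dep}. First I would observe that, by Assumption~\ref{ass:NEVCs}, $D(\bm{u})=D_0(D_1(\bm{u}_1),\dots,D_S(\bm{u}_S))$ is an EVC, hence a NEVC, so Lemma~\ref{lem:nesting:stable:tail:dep} shows that its stable tail dependence function is the nested function $\ell(\bm{x})=\ell_0(\ell_1(\bm{x}_1),\dots,\ell_S(\bm{x}_S))$, with the $\ell_s$ being exactly the stable tail dependence functions of the $D_s$. The vector $\bm{U}$ in the statement is then of precisely the form~\eqref{eq:stoch:rep:AXC} for this $\ell$, with $\bm{Y}\sim D$ and $V\sim F=\LSi[\psi]$ independent; consequently its copula is the AXC $C(\bm{u})=\psi(\ell(\psii(\bm{u})))$ in the sense of~\eqref{eq:AXC}, and substituting the nested form of $\ell$ yields the two displayed expressions.

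To make the argument self-contained I would also record the short computation underlying~\eqref{eq:stoch:rep:AXC} in this setting, since it pins down the one nontrivial ingredient, namely the homogeneity of order $1$ of $\ell$. As $\psi$ is decreasing, $\psi(-\log Y_j/V)\le u_j$ is equivalent to $Y_j\le\exp(-V\psii(u_j))$; conditioning on $V$, using that $\bm{Y}$ has distribution function $D$ with stable tail dependence function $\ell$ (so~\eqref{eq:gen:form:EVC} applies), and then using homogeneity of $\ell$, gives
\begin{align*}
  \P(\bm{U}\le\bm{u})&=\E\bigl(D(\exp(-V\psii(u_1)),\dots,\exp(-V\psii(u_d)))\bigr)\\
  &=\E\bigl(\exp(-\ell(V\psii(u_1),\dots,V\psii(u_d)))\bigr)=\E\bigl(\exp(-V\ell(\psii(\bm{u})))\bigr)=\psi\bigl(\ell(\psii(\bm{u}))\bigr),
\end{align*}
the last step because $\psi=\LS[F]$ with $V\sim F$. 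Setting $u_k=1$ for $k\neq j$ shows the margins of $\bm{U}$ are uniform, so $C(\bm{u})=\P(\bm{U}\le\bm{u})$ is indeed the copula of $\bm{U}$; inserting $\ell=\ell_0(\ell_1(\cdot),\dots,\ell_S(\cdot))$ from Lemma~\ref{lem:nesting:stable:tail:dep} then completes the proof.

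I do not expect a genuine obstacle here: the proposition is essentially a bookkeeping synthesis of~\eqref{eq:stoch:rep:AXC} and Lemma~\ref{lem:nesting:stable:tail:dep}, and the real content sits in that lemma, which is already established. The only points needing a little care are matching the block partition $\bm{u}=(\bm{u}_1,\dots,\bm{u}_S)$ of the argument with the corresponding inner generators $\ell_1,\dots,\ell_S$, and confirming that $\bm{Y}\sim D$ with $D$ a NEVC has continuous uniform margins so that the joint distribution function of $\bm{U}$ coincides with its copula — both immediate from the definitions.
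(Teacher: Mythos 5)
Your proposal is correct and follows essentially the same route as the paper: condition on $V$, use the extreme-value structure of $D$ to reduce $\E\bigl(D(e^{-V\psii(\bm{u})})\bigr)$ to $\E\bigl(\exp(-V\ell(\psii(\bm{u})))\bigr)=\psi\bigl(\ell(\psii(\bm{u}))\bigr)$, and then invoke Lemma~\ref{lem:nesting:stable:tail:dep} to substitute the nested form of $\ell$. The only cosmetic difference is that you pull $V$ out via homogeneity of order~$1$ of $\ell$ after applying~\eqref{eq:gen:form:EVC}, whereas the paper writes the same step as max-stability, $D(e^{-V\bm{x}})=D^V(e^{-\bm{x}})$, and your explicit check that the margins of $\bm{U}$ are uniform is a point the paper leaves implicit.
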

\begin{proof}
  \begin{align*}
    \P(\bm{U}\le\bm{u})&=\P(\bm{Y}_1\le e^{-V\psii(\bm{u}_1)},\dots,\bm{Y}_S\le e^{-V\psii(\bm{u}_S)})\\
    &=\E(\P(\bm{Y}_1\le e^{-V\psii(\bm{u}_1)},\dots,\bm{Y}_S\le e^{-V\psii(\bm{u}_S)}\,|\,V))\\
    &=\E(D(e^{-V\psii(\bm{u}_1)},\dots,e^{-V\psii(\bm{u}_S)}))=\E(D^V(e^{-\psii(\bm{u}_1)},\dots,e^{-\psii(\bm{u}_S)}))\\
    &=\E(\exp\bigl(-V\ell(\psii(\bm{u}_1),\dots,\psii(\bm{u}_S))\bigr))=\psi\bigl(\ell(\psii(\bm{u}_1),\dots,\psii(\bm{u}_S))\bigr)
  \end{align*}
  The claim immediately follows from Lemma~\ref{lem:nesting:stable:tail:dep} by noting
  that $D$ is nested as of Assumption~\ref{ass:NEVCs}.
\end{proof}

\begin{corollary}[Pairwise marginal copulas]
  Under the setup of Proposition~\ref{thm:NAXCs:via:NEVCs} the bivariate marginal
  copulas of $C$ satisfy
  \begin{align*}
    C(u_{si},u_{tj})=\begin{cases}
      \psi(\ell_s(\psii(u_{si}),\psii(u_{sj}))),&\text{if}\,\ t=s,\\
      \psi(\ell_0(\psii(u_{si}),\psii(u_{tj}))),&\text{otherwise}.
    \end{cases}
  \end{align*}
  Therefore, the bivariate marginal copulas of $C$ are (possibly different) AXCs.
\end{corollary}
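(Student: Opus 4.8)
The plan is to obtain each bivariate margin of $C$ by setting all but two of the arguments of $C(\bm u)=\psi\bigl(\ell_0(\ell_1(\psii(\bm u_1)),\dots,\ell_S(\psii(\bm u_S)))\bigr)$ equal to $1$ and then simplifying by means of two elementary properties. First, since $\psi$ is an Archimedean generator with $\psi(0)=1$, one has $\psii(1)=0$. Second, for any stable tail dependence function $\ell$ on $[0,\infty)^d$, substituting $0$ for the coordinates in an index set $K$ yields the stable tail dependence function of the corresponding lower-dimensional margin; this is immediate from the $d$-norm representation $\ell(\bm x)=\E(\lVert\bm{xW}\rVert_\infty)$ recorded in \eqref{dnorm:correspondence}, since the terms $|x_j|W_j$ with $j\in K$ drop out. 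In particular $\ell(\bm 0)=0$, a one-argument stable tail dependence function is the identity (by homogeneity of order $1$ together with $\ell$ equalling $1$ at the unit vectors), and evaluating $\ell_0$ at a vector with a single nonzero entry $x$ in position $k$ returns $x$.

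First I would treat the within-group case $t=s$. Setting $u_{s'i'}=1$ for all $(s',i')\neq(s,i),(s,j)$ sends $\psii(u_{s'i'})$ to $0$, so every inner generator $\ell_{s'}$ with $s'\neq s$ is evaluated at the zero vector and contributes $0$, while $\ell_s$ is evaluated at the vector whose only nonzero entries are $\psii(u_{si})$ and $\psii(u_{sj})$; by the margin property this equals $\ell_s(\psii(u_{si}),\psii(u_{sj}))$ with $\ell_s$ read as its bivariate $(i,j)$-margin. Feeding this into $\ell_0$, which then has a single nonzero argument in position $s$, returns that argument, and applying $\psi$ gives $C(u_{si},u_{sj})=\psi(\ell_s(\psii(u_{si}),\psii(u_{sj})))$.

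Next I would treat the between-group case $t\neq s$. The same substitution makes $\ell_s$ a one-argument evaluation, hence equal to $\psii(u_{si})$, makes $\ell_t$ equal to $\psii(u_{tj})$, and makes every other $\ell_{s'}$ vanish. Thus $\ell_0$ is evaluated at the vector whose only nonzero entries are $\psii(u_{si})$ in position $s$ and $\psii(u_{tj})$ in position $t$, so by the margin property it equals $\ell_0(\psii(u_{si}),\psii(u_{tj}))$ read as a bivariate margin of $\ell_0$, and applying $\psi$ yields $C(u_{si},u_{tj})=\psi(\ell_0(\psii(u_{si}),\psii(u_{tj})))$. Finally, each of the two expressions is of the form $\psi(\ell'(\psii(\cdot),\psii(\cdot)))$ for a bivariate stable tail dependence function $\ell'$, which by \eqref{eq:AXC} is exactly a bivariate AXC, establishing the last assertion. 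I expect no genuine obstacle beyond the bookkeeping; the one point worth stating carefully is the margin property of stable tail dependence functions under zeroing out coordinates, which is precisely where the $d$-norm representation does the work.
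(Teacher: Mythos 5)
Your proposal is correct and follows essentially the same route as the paper: set the remaining coordinates to $1$ so that $\psii(1)=0$, use the facts that a stable tail dependence function vanishes at $\bm{0}$, returns its single nonzero argument, and reduces to its bivariate margin when all but two coordinates are zero, then read off the two cases through $\ell_0$ and $\psi$. Your extra justification of the margin property via the $d$-norm representation is a harmless elaboration of what the paper simply states as a known property of $\ell$.
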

\begin{proof}
  For a stable tail dependence function $\ell$, one has that $\ell(\bm{x})=x_j$ if
  all components except the $j$th of $\bm{x}$ are 0. As such, for any
  $s\in\{1,\dots,S\}$,
  \begin{align*}
    \ell_s(\psii(u_{s1}),\dots,\psii(u_{sd_s}))=\begin{cases}
      0,&\text{if}\,\ u_{sj}=1\ \forall\,j\in\{1,\dots,d_s\},\\
      \psii(u_{sk}),&\text{if}\,\ u_{sj}=1\ \forall\,j\in\{1,\dots,d_s\}\backslash\{k\},\\
      \ell_s(\psii(u_{sk}),\psii(u_{sl})),&\text{if}\,\ u_{sj}=1\ \forall\,j\in\{1,\dots,d_s\}\backslash\{k,l\},
    \end{cases}
  \end{align*}
  from which the result follows.
\end{proof}

\subsection{Additionally nesting frailties}
As in the second method for introducing hierarchies on AXCs presented in
Section~\ref{sec:two:ways:HAXCs}, we could, additionally, impose a hierarchical
structure on the underlying (multiple) frailties. We focus on the two-level case
with $S$ different frailties. Assume, as before, the sufficient nesting
condition to hold, that is, $\psi_s\in\Psi$, $s\in\{0,\dots,S\}$, are
Archimedean generators and, for all $s\in\{0,\dots,S\}$, the derivative of
$\psiis{0}\circ\psi_s$ is completely monotone.

\begin{proposition}[NAXCs based on nested frailties]
  Let $D_s$, $s\in\{0,\dots,S\}$, be as in Assumption~\ref{ass:NEVCs} with
  respective stable tail dependence functions $\ell_s$,
  $s\in\{0,\dots,S\}$. Furthermore, let $\psi_s\in\Psi_\infty$,
  $s\in\{0,\dots,S\}$, and assume that the sufficient nesting condition holds. Assume
  $V_0\sim F_0=\LSi[\psi_0]$ and
  $V_{0s}\,|\,V_0\sim F_{0s}=\LSi[\psi_{0s}(\cdot\,;V_0)]$,
  $s\in\{1,\dots,S\}$. Moreover, let
  $\bm{Y}=(\bm{Y}_1,\dots,\bm{Y}_S)\sim D$ be independent of
  $V_0,V_1,\dots,V_S$ and assume that
  \begin{align}
    &\phantom{{}={}}\E\bigl(\E\bigl(D_0(D_1(e^{-V_{01}\psiis{1}(\bm{u}_{1})}),\dots,D_S(e^{-V_{0S}\psiis{S}(\bm{u}_{S})}))\,\big|\,V_0\bigr)\bigr)\notag\\
    &=\E\bigl(D_0\bigl(\E(D_1(e^{-V_{01}\psiis{1}(\bm{u}_{1})})\,|\,V_0),\dots,\E(D_S(e^{-V_{0S}\psiis{S}(\bm{u}_{S})})\,|\,V_0)\bigr)\bigr).\label{crucial:cond}
  \end{align}
  Then the copula $C$ of
  \begin{align*}
    \bm{U}=\Bigl(\psi_1\Bigl(\frac{-\log\bm{Y}_1}{V_{01}}\Bigr),\dots,\psi_S\Bigl(\frac{-\log\bm{Y}_S}{V_{0S}}\Bigr)\Bigr)
  \end{align*}
  is given by
  \begin{align*}
    C(\bm{u})=C_0(C_1(\bm{u}_1),\dots,C_S(\bm{u}_S)),\quad\bm{u}\in[0,1]^d,
  \end{align*}
  where, for all $s\in\{0,\dots,S\}$, $C_s$ is Archimax with generator $\psi_s$ and stable tail dependence function
  $\ell_s$.
\end{proposition}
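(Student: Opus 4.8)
The plan is to compute the joint distribution function of $\bm{U}$ directly and to recognise the outcome as the nested Archimax form. Since each generator $\psi_s$ is continuous and strictly decreasing on the relevant range, the event $\{\psi_s(-\log Y_{sj}/V_{0s})\le u_{sj}\}$ is the same as $\{Y_{sj}\le\exp(-V_{0s}\psiis{s}(u_{sj}))\}$. Conditioning on the frailties $(V_{01},\dots,V_{0S})$, using that $\bm{Y}$ is independent of them and that $\bm{Y}\sim D$ with $D(\bm{v})=D_0(D_1(\bm{v}_1),\dots,D_S(\bm{v}_S))$, gives
\[
\P(\bm{U}\le\bm{u})=\E\bigl(D_0\bigl(D_1(e^{-V_{01}\psiis{1}(\bm{u}_1)}),\dots,D_S(e^{-V_{0S}\psiis{S}(\bm{u}_S)})\bigr)\bigr),
\]
with the componentwise exponential notation as in the statement.

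Next I would condition on $V_0$. Because $V_{01},\dots,V_{0S}$ are conditionally independent given $V_0$, the tower property turns the right-hand side into an outer expectation over $V_0$ of a conditional expectation over $(V_{01},\dots,V_{0S})$; this is exactly the left-hand side of the assumed identity \eqref{crucial:cond}, which I would invoke to move the conditional expectation inside $D_0$. It then remains to evaluate $\E\bigl(D_s(e^{-V_{0s}\psiis{s}(\bm{u}_s)})\,\big|\,V_0\bigr)$ for each $s$. Writing $D_s=\exp(-\ell_s(-\log\cdot))$ and using that $\ell_s$ is homogeneous of order $1$, one has $D_s(e^{-V_{0s}\psiis{s}(\bm{u}_s)})=\exp(-V_{0s}\ell_s(\psiis{s}(\bm{u}_s)))$, so the conditional expectation equals
\[
\psi_{0s}\bigl(\ell_s(\psiis{s}(\bm{u}_s));V_0\bigr)=\exp\bigl(-V_0\,\psiis{0}\bigl(\psi_s(\ell_s(\psiis{s}(\bm{u}_s)))\bigr)\bigr)=\exp\bigl(-V_0\,\psiis{0}(C_s(\bm{u}_s))\bigr),
\]
recognising $\psi_s\circ\ell_s\circ\psiis{s}$ as the Archimax copula $C_s$ generated by $\psi_s$ with stable tail dependence function $\ell_s$.

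Substituting these $S$ expressions into $D_0$, applying the same extreme-value-plus-homogeneity step to $D_0$ and $\ell_0$, and taking the outer expectation over $V_0\sim\LSi[\psi_0]$ yields
\[
\P(\bm{U}\le\bm{u})=\E\bigl(\exp\bigl(-V_0\,\ell_0(\psiis{0}(C_1(\bm{u}_1)),\dots,\psiis{0}(C_S(\bm{u}_S)))\bigr)\bigr)=\psi_0\bigl(\ell_0(\psiis{0}(C_1(\bm{u}_1)),\dots,\psiis{0}(C_S(\bm{u}_S)))\bigr),
\]
which is precisely $C_0(C_1(\bm{u}_1),\dots,C_S(\bm{u}_S))$ with $C_0$ the Archimax copula generated by $\psi_0$ with stable tail dependence function $\ell_0$. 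Setting all coordinates but one equal to $1$ and using the normalisation $\ell_s(0,\dots,t,\dots,0)=t$ (once for the inner $\ell_s$ and once for $\ell_0$) confirms that this function has standard uniform margins, so it genuinely is the copula of $\bm{U}$.

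Two points need care beyond the bookkeeping. First, the conditional laws $F_{0s}=\LSi[\psi_{0s}(\cdot\,;V_0)]$ must be well defined: this is where the sufficient nesting condition is used, since $t\mapsto\psiis{0}(\psi_s(t))$ having a completely monotone derivative makes $t\mapsto\exp(-v\,\psiis{0}(\psi_s(t)))$ completely monotone for every $v>0$, hence a Laplace--Stieltjes transform by Bernstein's Theorem. Second, the Fubini and conditional-independence manipulations are legitimate because every integrand is bounded by $1$. I do not expect the interchange to be the obstacle, since it is precisely hypothesis \eqref{crucial:cond}; the real content is the chain of homogeneity and Laplace--Stieltjes identities that collapses the nested EVC together with the nested frailties into the nested Archimax expression.
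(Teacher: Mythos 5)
Your proposal is correct and follows essentially the same route as the paper's own proof: condition on the frailties, invoke \eqref{crucial:cond} to push the conditional expectation inside $D_0$, evaluate $\E(D_s(e^{-V_{0s}\psiis{s}(\bm{u}_s)})\,|\,V_0)$ via homogeneity (equivalently, max-stability) to recognize $\psi_{0s}(\ell_s(\psiis{s}(\bm{u}_s));V_0)=e^{-V_0\psiis{0}(C_s(\bm{u}_s))}$, and collapse the outer layer with $\ell_0$ and $\psi_0$ into $C_0(C_1(\bm{u}_1),\dots,C_S(\bm{u}_S))$. Your additional remarks on the well-definedness of $F_{0s}$ via the sufficient nesting condition and the uniform-margins check are consistent extras, not deviations.
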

\begin{proof}
  \begin{align*}
    \P(\bm{U}\le\bm{u})&=\P(\bm{Y}_{1}\le e^{-V_{01}\psiis{1}(\bm{u}_{1})},\dots,\bm{Y}_{S}\le e^{-V_{0S}\psiis{S}(\bm{u}_{S})})\\
    &=\E\bigl(\E(\P(\bm{Y}_{1}\le e^{-V_{01}\psiis{1}(\bm{u}_{1})},\dots,\bm{Y}_{S}\le e^{-V_{0S}\psiis{S}(\bm{u}_{S})}\,|\,V_{01},\dots,V_{0S})\,|\,V_0)\bigr)\\
    &=\E\bigl(\E(D(e^{-V_{01}\psiis{1}(\bm{u}_{1})},\dots,e^{-V_{0S}\psiis{S}(\bm{u}_{S})})\,|\,V_0)\bigr)\\
    &\omuc{}{=}{\eqref{crucial:cond}}\E\bigl(D_0\bigl(\E(D_1(e^{-V_{01}\psiis{1}(\bm{u}_{1})})\,|\,V_0),\dots,\E(D_S(e^{-V_{0S}\psiis{S}(\bm{u}_{S})})\,|\,V_0)\bigr)\bigr).
  \end{align*}
  Each component $\E(D_s(e^{-V_{0s}\psiis{s}(\bm{u}_{s})})\,|\,V_0)$, $s\in\{1,\dots,S\}$, satisfies
  \begin{align*}
    \E(D_s(e^{-V_{0s}\psiis{s}(\bm{u}_{s})})\,|\,V_0)&=\E(D_s^{V_{0s}}(e^{-\psiis{s}(\bm{u}_{s})})\,|\,V_0)=\E(e^{-V_{0s}\ell_s(\psiis{s}(\bm{u}_{s}))}\,|\,V_0)\\
    &=\psi_{0s}(\ell_s(\psiis{s}(\bm{u}_s));V_0),
  \end{align*}
  thus
  \begin{align*}
    \P(\bm{U}\le\bm{u})&=\E\bigl(D_0\bigl(\psi_{01}(\ell_1(\psiis{1}(\bm{u}_1));V_0),\dots,\psi_{0S}(\ell_S(\psiis{S}(\bm{u}_S));V_0)\bigr)\bigr)\\
    &=\E\bigl(D_0\bigl(e^{-V_0\psiis{0}(C_1(\bm{u}_1))},\dots,e^{-V_0\psiis{0}(C_S(\bm{u}_S))}\bigr)\bigr)\\
    &=\E\bigl(D_0^{V_0}\bigl(e^{-\psiis{0}(C_1(\bm{u}_1))},\dots,e^{-\psiis{0}(C_S(\bm{u}_S))}\bigr)\bigr)\\
    &=\E\Bigl(e^{-V_0\bigl(\ell_0\bigl(\psiis{0}(C_1(\bm{u}_1)),\dots,\psiis{0}(C_S(\bm{u}_S))\bigr)\bigr)}\Bigr)\\
    &=\psi_0\bigl(\ell_0\bigl(\psiis{0}(C_1(\bm{u}_1)),\dots,\psiis{0}(C_S(\bm{u}_S))\bigr)\bigr)=C_0(C_1(\bm{u}_1),\dots,C_S(\bm{u}_S)).\qedhere
  \end{align*}
\end{proof}

The following corollary provides a condition under which
Assumption~\eqref{crucial:cond} holds. Note that this particular model can
already be found in \cite{mcfadden1978}.
\begin{corollary}[AC composed with AXCs]
  If $D(\bm{u})=\prod_{s=1}^S D_s(\bm{u}_s)$, \eqref{crucial:cond} holds
  and $C(\bm{u})=C_0(C_1(\bm{u}_1),\dots,C_S(\bm{u}_S))$, where $C_0$ is
  Archimedean and $C_1,\dots,C_S$ are Archimax. In particular, if $D$ is the independence
  copula, \eqref{crucial:cond} holds and $C$ is a NAC.
\end{corollary}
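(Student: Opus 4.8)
The plan is to obtain this corollary as a direct consequence of the preceding proposition, since all of its other hypotheses are already in force here; the only thing to check is that the product form of $D$ forces the crucial identity~\eqref{crucial:cond}. First I would note that $D(\bm{u})=\prod_{s=1}^S D_s(\bm{u}_s)$ is precisely the statement that the outer copula is the $S$-dimensional independence copula, $D_0(v_1,\dots,v_S)=\prod_{s=1}^S v_s$, whose stable tail dependence function is $\ell_0(\bm{x})=\sum_{s=1}^S x_s$. Plugging this $D_0$ into~\eqref{crucial:cond}, the left-hand side becomes $\E\bigl(\E\bigl(\prod_{s=1}^S D_s(e^{-V_{0s}\psiis{s}(\bm{u}_s)})\,\big|\,V_0\bigr)\bigr)$ and the right-hand side becomes $\E\bigl(\prod_{s=1}^S\E\bigl(D_s(e^{-V_{0s}\psiis{s}(\bm{u}_s)})\,\big|\,V_0\bigr)\bigr)$. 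The key step is then a conditional-independence argument: by construction the frailties $V_{01},\dots,V_{0S}$ are drawn independently of one another given $V_0$, and for fixed $\bm{u}$ each $D_s(e^{-V_{0s}\psiis{s}(\bm{u}_s)})$ is a deterministic measurable function of $V_{0s}$ alone; hence these $S$ random variables are conditionally independent given $V_0$, so the conditional expectation of their product factorizes into the product of the conditional expectations. That is exactly the equality of the two displays, and so~\eqref{crucial:cond} holds.

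With~\eqref{crucial:cond} in hand, the preceding proposition gives $C(\bm{u})=C_0(C_1(\bm{u}_1),\dots,C_S(\bm{u}_S))$ with $C_0$ Archimax with generator $\psi_0$ and stable tail dependence function $\ell_0$, and each $C_s$, $s\in\{1,\dots,S\}$, Archimax with generator $\psi_s$ and stable tail dependence function $\ell_s$. Since $\ell_0(\bm{x})=\sum_{s=1}^S x_s$, the special case noted after~\eqref{eq:stoch:rep:AXC} (an AXC with the summation stable tail dependence function is Archimedean) shows that $C_0$ is Archimedean, which is the first assertion. For the ``in particular'' claim, if $D$ is the full $d$-dimensional independence copula then each block copula $D_s$ is itself an independence copula, so $\ell_s(\bm{x}_s)=\sum_j x_{sj}$, and the same observation makes every $C_s$ Archimedean; hence $C=C_0(C_1(\bm{u}_1),\dots,C_S(\bm{u}_S))$ is a composition of Archimedean copulas under the sufficient nesting condition, i.e.\ a NAC.

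The only real obstacle is making the conditional-independence bookkeeping precise: one needs to justify carefully that, once $D_0$ is the independence copula, the term $D(e^{-V_{01}\psiis{1}(\bm{u}_1)},\dots,e^{-V_{0S}\psiis{S}(\bm{u}_S)})$ occurring inside~\eqref{crucial:cond} genuinely splits as $\prod_{s=1}^S D_s(e^{-V_{0s}\psiis{s}(\bm{u}_s)})$, and that conditioning on $V_0$ decouples the $S$ groups because the $V_{0s}$ are conditionally independent given $V_0$. Everything after that is an immediate appeal to the proposition together with the characterization of Archimedean AXCs via the summation stable tail dependence function.
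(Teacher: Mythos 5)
Your proof is correct and follows essentially the same route as the paper's: the product form of $D$ makes $D_0$ the independence copula, so the inner term factorizes and the conditional independence of $V_{01},\dots,V_{0S}$ given $V_0$ splits the conditional expectation, which yields \eqref{crucial:cond}; the rest is the proposition plus the observation that a summation stable tail dependence function makes the corresponding AXC Archimedean. Your treatment of $C_0$ (via $\ell_0(\bm{x})=\sum_{s}x_s$) and of the ``in particular'' NAC case matches the paper's argument, just spelled out slightly more explicitly.
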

\begin{proof}
  If $D(\bm{u})=\prod_{s=1}^S D_s(\bm{u}_s)$, then, conditional on $V_0$, the
  sector components are independent and we obtain
  \begin{align*}
    &\phantom{{}={}}\E\bigl(D_0(D_1(e^{-V_{01}\psiis{1}(\bm{u}_{1})}),\dots,D_S(e^{-V_{0S}\psiis{S}(\bm{u}_{S})}))\,\big|\,V_0\bigr)\\
    &=\E\Bigl(\,\prod_{s=1}^SD_s(e^{-V_{0s}\psiis{s}(\bm{u}_{s})})\,\Big|\,V_0\Bigr)=\prod_{s=1}^S\E(D_s(e^{-V_{0s}\psiis{s}(\bm{u}_{s})})\,|\,V_0)\\
    &=D_0\bigl(\E(D_1(e^{-V_{01}\psiis{1}(\bm{u}_{1})})\,|\,V_0),\dots,\E(D_S(e^{-V_{0S}\psiis{S}(\bm{u}_{S})})\,|\,V_0)\bigr).
  \end{align*}
  and thus \eqref{crucial:cond} follows by taking the expectation. The rest
  follows immediately by noting that an EVC is the independence
  copula if and only if its stable tail dependence function is the sum of its components,
  so the Archimax (sector) copulas
  $C_s(\bm{u}_s)=\psi_s\bigl(\ell_s(\psiis{s}(u_{s1}),\dots,\psiis{s}(u_{sd_s}))\bigr)$ are
  Archimedean generated by $\psi_s$, $s\in\{1,\dots,S\}$.
\end{proof}

\printbibliography[heading=bibintoc]
\end{document}

%
%
%
%
